\newcommand{\bbox}{\rule[-0.1em]{0.85em}{0.85em}}
\newcommand{\wbox}{\mbox{}}
\newcommand{\fin}{\hfill$\square$}
\newcommand{\gc}{ [ \hspace{-0.65mm} [}
\newcommand{\dc}{]  \hspace{-0.65mm} ]}
\newcommand{\mmpc}{O_{q}\left( \mathcal{M}_{m,p}(\mathbb{C}) \right)}
\def\mmp{M_{m,p}(\mr)}
\newcommand{\oc}{\mathcal{O}}
\newcommand{\mc}{\mathcal{M}}
\newcommand{\ia}{i,\alpha}
\newcommand{\hc}{\mathcal{H}}
\newcommand{\ch}{\mathcal{H}}
\def\tnn{{\rm tnn}}
\newtheorem{theorem}{Theorem}[section]
\newtheorem{proposition}[theorem]{Proposition}
\theoremstyle{definition}
\newtheorem{definition}[theorem]{Definition}
\newtheorem{notation}[theorem]{Notation}
\newtheorem{example}[theorem]{Example}
\newtheorem{exercise}[theorem]{Exercise}
\newcommand{\bbR}{\mathbb{R}} 
\newcommand{\bbC}{\mathbb{C}} 
\newcommand{\bbK}{\mathbb{C}} 
\newcommand{\hspec}{{\mathcal H}-{\rm Spec}}
\newcommand{\Mmpc}{\mc_{m,p}(\bbC)}
\newcommand{\OMmpc}{\oc(\Mmpc)}
\newcommand{\Mtwoc}{\mc_{2}(\bbC)}
\newcommand{\OMtwoc}{\oc(\Mtwoc)}
\newcommand{\oqmtwo}{\oc_q(\Mtwoc)}
\newcommand{\oqmmpc}{\oc_q(\Mmpc)}
\def\mmp{\mc_{m,p}}
\def\mtnnmp{\mmp^{\rm tnn}}
\newcommand{\lc}{\mathcal{L}}
\newcommand{\wop}{w_\circ^p}
\newcommand{\wom}{w_\circ^m}
\newcommand{\woN}{w_\circ^N}
\newcommand{\onem}{\gc 1,m \dc}
\newcommand{\onep}{\gc 1,p \dc}
\def\edge{\ar@{-}}
\def\plc{*{}+0}
\def\bigblacksquare{\vrule height 8pt depth 2pt width 10pt}
\def\mdblk{\save+<0ex,0ex>\drop{\bigblacksquare}\restore}
\title{From totally nonnegative matrices to quantum matrices and back, via 
Poisson geometry
}
\author{S. Launois\thanks{\,The research of the first named author was
supported by a Marie Curie European Reintegration Grant within the
$7^{\mbox{th}}$ European Community Framework Programme.}~~and T.H. Lenagan}
\date{}
\begin{document}
\maketitle

\abstract{\footnotesize In this survey article, we describe recent work that connects three separate objects of interest: totally nonnegative matrices; quantum matrices; and matrix Poisson varieties.}\\

\noindent
{\em Mathematics Subject Classification 2000:} 
14M15, 15A48, 16S38, 16W35, 17B37, 17B63, 20G42, 53D17 

\noindent 
{\em Keywords:} totally positive matrices, totally nonnegative matrices, cells, 
Poisson algebras, symplectic leaves, quantum matrices, torus-invariant prime ideals 

\section*{Introduction} 

In recent publications, the same combinatorial
description has arisen for three separate objects of interest: $\hc$-prime ideals in quantum matrices, $\hc$-orbits of symplectic leaves in matrix Poisson varieties and totally nonnegative cells in the space of totally nonnegative matrices.  

Many quantum algebras have a natural action by a torus and a key ingredient in the study of the structure of these algebras is an understanding of the torus-invariant objects. For example, the Stratification Theory of Goodearl and Letzter shows that, in the generic case, a complete understanding of the prime
spectrum of quantum matrices would start by classifying the (finitely many) torus-invariant prime ideals. In \cite{Cau2}
Cauchon succeeded in counting the number of torus-invariant prime ideals in
quantum matrices. His method involved a bijection between certain diagrams,
now known as Cauchon diagrams, and the torus-invariant primes. Considerable progress in the understanding of quantum matrices has been made since that time by using Cauchon diagrams. 

The semiclassical limit of quantum matrices is the classical coordinate ring
of the variety of matrices endowed with a Poisson bracket that encodes the
nature of the quantum deformation which leads to quantum matrices. As a
result, the variety of matrices is endowed with a Poisson structure. A natural
torus action leads to a stratification of the variety via torus-orbits of
symplectic leaves. In \cite{BGY}, Brown, Goodearl and Yakimov showed that
there are finitely many such torus-orbits of symplectic leaves. Each torus
orbit is defined by certain rank conditions on submatrices. The classification
is given in terms of certain permutations from the relevant symmetric group
with restrictions arising from the Bruhat order. 

The nonnegative part of the space of real matrices consists of those matrices
whose minors are all nonnegative. One can specify a cell decomposition of the
set of totally nonnegative matrices by specifying exactly which minors are to
be zero/non-zero. In \cite{Pos}, Postnikov classified the nonempty cells by
means of a bijection with certain diagrams, known as Le-diagrams. The work of Postnikov was then developed by Talaska \cite{Tal}, Williams \cite{Wil}, etc., and led to the definition of {\it positroid varieties} that have been recently studied by Oh \cite{Oh} and Knutson, Lam and Speyer \cite{KLS}.

The interesting observation from the point of view of this work is that in
each of the above three sets of results the combinatorial objects that arise
turn out to be the same! The definitions of Cauchon diagrams and Le-diagrams
are the same, and the restricted permutations arising in the
Brown-Goodearl-Yakimov study can be seen to lead to Cauchon/Le diagrams via
the notion of pipe dreams.

Once one is aware of these connections, this suggests that there should be a
connection between torus-invariant prime ideals, torus-orbits of symplectic
leaves and totally nonnegative cells. This connection has been investigated in
recent papers by Goodearl and the present authors, \cite{GLL, GLL2}. In
particular, we have shown that the Restoration Algorithm, developed by the
first author for use in quantum matrices, can also be used in the other two
settings to answer questions concerning the torus-orbits of symplectic leaves
and totally nonnegative cells. The detailed proofs of the results that were
obtained in \cite{GLL, GLL2} are very technical, and our aim in this survey,
is to describe the results informally and to compute some examples to
illuminate our results. 

\section{Totally nonnegative matrices} 

A matrix is {\em totally positive} (TP for short) if each of its minors is positive
and is {\em totally nonnegative} (TNN for short) if 
each of its minors is nonnegative. 

An excellent survey of totally positive and totally nonnegative
matrices can be found in \cite{FZ}. In this survey, the authors draw 
attention to appearance of TP and TNN matrices in many areas of mathematics, 
including: oscillations in mechanical systems, stochastic processes and 
approximation theory, P\'olya frequency sequences, representation theory, 
planar networks, ... . 
A good source of examples, especially illustrating the important link with 
planar networks (discussed below) is 
\cite{Sk}.

\subsection{Checking total positivity and total nonnegativity}

Let us start with an example.

\begin{example}\label{example-4x4}\rm  cf. \cite{Sk}. Is the matrix 
\[A:= 
\left(\begin{array}{cccc}
5&6&3&0\\
4&7&4&0\\
1&4&4&2\\
0&1&2&3
\end{array}\right)
\]
totally nonnegative? 

In order to check this by calculating all minors, we would have to
calculate 
\[
{4\choose 1}^2 + {4\choose 2}^2+{4\choose 3}^2+{4\choose 4}^2
= 16+36+16+1=69\]
minors. In general, the number of minors of an $n\times n$ matrix is 
\[
\sum_{k=1}^n\,{n\choose k}^2 
\quad=\quad {2n\choose n} -1
\quad\approx \quad \frac{4^n}{\sqrt{\pi n}}   
\]
by using Stirling's approximation
\[
n!\approx\sqrt{2\pi n}\frac{n^n}{e^n}.
\]
This suggests that we do not want to calculate all of the minors to
check for total nonnegativity.  \fin\\
\end{example} 

Luckily, for total positivity, we can get away with much
less. The simplest example is the $2\times 2$ case. 


\begin{example}\label{example-2x2}\rm

The matrix 
\[
\left(\begin{array}{cc}
a&b\\
c&d
\end{array}\right)
\]
has {\em five} minors: $a,b,c,d,\Delta = ad-bc$.

If $a,b,c,\Delta = ad-bc >0$ then 
\[
d=\frac{\Delta+bc}{a} >0\]
so it is sufficient to check {\em four} minors. \fin\\
\end{example}

The optimal result is due to Gasca and Pe\~na, \cite[Theorem 4.1]{GaPe}: 
for an $n\times n$ matrix, it is only necessary to
check $n^2$ specified minors.

\begin{definition}\rm 
A minor is said to be an {\em initial minor} if it is formed of consecutive rows and columns, one of which being the first row or the first column. 
\end{definition}

For example, a $2\times 2$ matrix has 4 initial minors: $a$, $b$, $c$ and $\Delta$. More generally, an initial minor is specified by its bottom right entry; so an $n\times n$ matrix has $n^2$ initial minors. 

\begin{theorem} (Gasca and Pe\~na) 
The $n\times n$ matrix $A$ is totally positive  if and only if each 
of its initial minors is positive.
\end{theorem}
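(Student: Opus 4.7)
The forward implication is trivial: every initial minor is already one of the minors of $A$. The substance of the theorem is the converse, which amplifies positivity of only $n^{2}$ distinguished minors into positivity of all $\binom{2n}{n}-1$ minors of $A$.

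My plan is a two-stage inductive argument driven by the Desnanot--Jacobi (``Lewis Carroll'') identity: for any $(k{+}1)\times(k{+}1)$ submatrix $B$ with central $(k{-}1)\times(k{-}1)$ block $\widehat{B}$,
\[
\det(B)\,\det(\widehat{B})\;=\;\det(B^{1,1})\,\det(B^{k+1,k+1})\;-\;\det(B^{1,k+1})\,\det(B^{k+1,1}),
\]
where $B^{i,j}$ denotes $B$ with row $i$ and column $j$ deleted. In the first stage I would promote initial minors to all \emph{solid} minors (consecutive rows and consecutive columns). Given a solid $k\times k$ minor with top-left corner $(a,b)$, $a,b\ge 2$, I apply the identity to the $(k{+}1)\times(k{+}1)$ solid block of $A$ with top-left corner $(a{-}1,b{-}1)$ and solve for the minor of interest; this rewrites it as a positive rational combination of solid minors that either sit closer to the top-left corner of $A$ (smaller value of $a+b$) or are strictly smaller in size. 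An outer induction on $a+b$, with an inner induction on size to handle the $(k{-}1)\times(k{-}1)$ central factor, then closes the loop, the base cases being the initial minors (positive by hypothesis) and the $0\times 0$ empty determinant. In the second stage I would pass from solid minors to arbitrary ones by a short three-term (Pl\"ucker-type) induction on the total ``gap'' $\sum_s(i_{s+1}-i_{s}-1)+\sum_t(j_{t+1}-j_{t}-1)$ in the row and column index sets, with the base case supplied by stage one.

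The main obstacle is the bookkeeping in the first stage: Desnanot--Jacobi spawns five minors of different sizes and positions, so one must arrange the double induction carefully so that each of them, and in particular the denominator $\det(B^{k+1,k+1})$, has already been shown positive before the target minor is addressed. A cleaner if somewhat heavier alternative would be to follow the Gasca--Pe\~na route via Neville elimination: positivity of the initial minors implies an explicit factorisation of $A$ as a product of elementary bidiagonal matrices with strictly positive parameters, whence total positivity follows either by a Cauchy--Binet product argument or by reading the factorisation as a planar network and invoking the Lindstr\"om--Gessel--Viennot path-counting lemma.
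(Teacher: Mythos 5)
The paper does not prove this theorem; it simply quotes it with the citation \cite{GaPe}, so there is no in-paper argument to compare against. Your forward direction is trivial, as you say, and Stage~1 is correct and self-contained: applying Desnanot--Jacobi to the $(k+1)\times(k+1)$ solid block with corner $(a{-}1,b{-}1)$ and solving for $\det(B^{1,1})$ gives
\[
\det(B^{1,1})=\frac{\det(B)\det(\widehat{B})+\det(B^{1,k+1})\det(B^{k+1,1})}{\det(B^{k+1,k+1})},
\]
and every determinant on the right either has smaller $a+b$ (so is handled by the outer induction) or, in the single case of $\widehat{B}$, the same corner $(a,b)$ but size $k-1$ (handled by the inner induction, with the empty minor as base). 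The combination is manifestly a positive expression in already-established positive quantities, so Stage~1 closes.

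Stage~2 as you have sketched it is the place to be careful. What you are invoking is Fekete's criterion (solid minors positive implies TP), which is indeed a classical theorem, but the description ``short three-term Pl\"ucker-type induction on the total gap'' glosses over a real obstruction. If you apply the short Pl\"ucker relation $[Sac][Sbd]=[Sab][Scd]+[Sad][Sbc]$ to resolve a gap in $J$, then whichever way you assign $a,b,c,d$, at least one of the minors appearing on the right has strictly larger column span than the target (or else you must take a difference, not a sum), so a straight induction on the gap statistic you propose does not terminate. The genuine proofs of Fekete (Fekete--P\'olya, Gantmacher--Krein, Ando, Pinkus) are more delicate: they typically first prove the intermediate statement ``solid $\Rightarrow$ consecutive rows, arbitrary columns'' via the linear-dependence/cofactor relation for a $(p{+}1)\times p$ (or $p\times(p{+}1)$) auxiliary block, combined with a carefully chosen induction parameter, and then dualize. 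Your Stage~2 needs to be replaced by one of these arguments, or simply by a citation to Fekete. Alternatively, the Neville-elimination route you mention at the end is exactly what Gasca and Pe\~na do: positivity of the initial minors yields an explicit bidiagonal factorisation with positive parameters, and total positivity then drops out from the planar-network (Lindstr\"om--Gessel--Viennot) picture or from Cauchy--Binet. That route sidesteps Fekete altogether and is probably the cleaner way to discharge the whole theorem.
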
 

There is no such family to check whether a matrix is TNN. 
However Gasca and Pe\~na do give an efficient algorithm to check TNN,
see the comment after \cite[Theorem 5.4]{GaPe}.

\subsection{Planar networks} 

We refer the reader to \cite{Sk} for the definition of a planar network. 
Consider a directed planar graph with no directed cycles, 
$m$ sources, $s_i$ and $n$ sinks $t_j$. See Figure~\ref{fig:planarnet} 
(taken from \cite{Sk}) 
for an example.

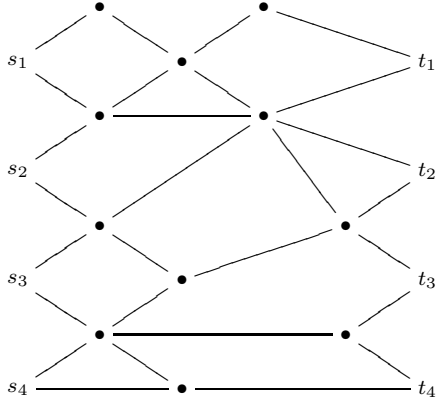
\begin{figure}
$$
\xymatrixrowsep{4ex}\xymatrixcolsep{6ex}\def\objectstyle{\scriptstyle}
\xymatrix@!0
{
 &\bullet\edge[dl]\edge[dr]&&\bullet\edge[dl]\edge[drr]&&\\
 s_1\edge[dr] &&\bullet\edge[dl]\edge[dr]&&&t_1\edge[dll]\\
 &\bullet\edge[dl]\edge[rr]&&\bullet\edge[ddll]\edge[ddr]\edge[drr]&&\\
 s_2\edge[dr]&&&&&t_2\edge[dl]\\
 &\bullet\edge[dl]\edge[dr]&&&\bullet\edge[dll]\edge[dr]&\\
 s_3\edge[dr]&&\bullet\edge[dl]&&&t_3\edge[dl]\\
 &\bullet\edge[dl]\edge[dr]\edge[rrr]&&&\bullet\edge[dr]&\\
 s_4\edge[rr]&&\bullet\edge[rrr]&&&t_4
}
$$
\caption{\label{fig:planarnet}An example of a 
planar network (all edges are directed left to right)}
\end{figure}

Set {$M=\left(m_{ij}\right)$ 
 where $m_{ij}$ is the number of paths from source $s_i$ 
 to sink $t_j$. The matrix $M$ is called the {\em path matrix} of this planar network.

\begin{notation}\rm
The minor formed by using rows from a set $I$ and columns 
from a set $J$ is denoted by $[I\mid J]$.
\end{notation}

Planar networks give an easy way to construct TNN matrices.

\begin{theorem}(Lindstr\"om's Lemma, \cite{Lind}) 
\label{lindstrom}
The path matrix of any planar network is totally nonnegative. 
In fact, the minor  $[I\mid J]$ is equal to the number of families of 
non-intersecting paths from sources indexed by $I$ and sinks indexed 
by $J$. 
\end{theorem}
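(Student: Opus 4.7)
The plan is to prove the stronger ``in fact'' statement, since the nonnegativity claim then follows immediately (a count of objects is a nonnegative integer). The approach is the classical sign-reversing involution on path families.

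First, I would expand the minor by the Leibniz formula:
\[
[I\mid J] \;=\; \sum_{\sigma} \mathrm{sgn}(\sigma)\, \prod_{k} m_{i_k, j_{\sigma(k)}},
\]
where $I=\{i_1<\cdots<i_r\}$, $J=\{j_1<\cdots<j_r\}$, and $\sigma$ ranges over permutations of $\{1,\ldots,r\}$. Since $m_{i_k,j_{\sigma(k)}}$ counts paths from $s_{i_k}$ to $t_{j_{\sigma(k)}}$, each product counts the families $(P_1,\ldots,P_r)$ of paths where $P_k$ runs from $s_{i_k}$ to $t_{j_{\sigma(k)}}$. Hence
\[
[I\mid J] \;=\; \sum_{(\sigma, P_1,\ldots,P_r)} \mathrm{sgn}(\sigma),
\]
the sum being taken over all pairs of a permutation $\sigma$ and a corresponding path family.

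Next I would construct a fixed-point-free, sign-reversing involution $\Phi$ on the set of pairs $(\sigma, P_1,\ldots,P_r)$ whose path family is \emph{intersecting}. For such a family, let $k$ be the smallest index such that $P_k$ shares a vertex with some other path, and let $v$ be the first such vertex along $P_k$; among the paths meeting $P_k$ at $v$, let $\ell$ be the smallest index with $\ell>k$ whose path $P_\ell$ passes through $v$. Define $\Phi$ by swapping the tails of $P_k$ and $P_\ell$ after $v$; simultaneously replace $\sigma$ by $\sigma\circ(k\,\ell)$. This is well-defined, visibly an involution (applying the same rule to the new family returns the smallest offending index $k$ and the same meeting point $v$), and it reverses the sign of $\sigma$. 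Therefore all intersecting contributions cancel.

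Finally, I would argue that any \emph{non-intersecting} family must have $\sigma=\mathrm{id}$. This is the geometric input from planarity: because the sources $s_{i_1},\ldots,s_{i_r}$ appear in order along the left boundary and the sinks $t_{j_1},\ldots,t_{j_r}$ appear in order along the right boundary of a planar embedding with no directed cycles, any system of disjoint paths must preserve the cyclic order, forcing $P_k$ to go from $s_{i_k}$ to $t_{j_k}$. Consequently the only surviving contributions to the signed sum are $+1$ for each non-intersecting family matched identically, which gives exactly the asserted count.

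The main obstacle is the middle step: making the involution genuinely canonical. One has to specify the ``first'' intersection unambiguously, and then verify that $\Phi\circ\Phi$ recovers the original data, including that the pair $(k,\ell)$ computed from the swapped family coincides with the original pair. The choice ``smallest $k$, then first vertex $v$ along $P_k$, then smallest $\ell>k$ at $v$'' does the job because after swapping, the paths indexed by integers less than $k$ are unchanged and still non-intersecting among themselves and with the rest up to $v$, while $v$ is still the earliest intersection point on the (new) $P_k$. The planarity hypothesis enters only in the last step, to rule out $\sigma\neq\mathrm{id}$ for non-intersecting families.
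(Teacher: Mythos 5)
The paper states this result as Lindström's Lemma with a citation to \cite{Lind} and offers no proof of its own, so there is no internal argument to compare against. Your proposal is the standard Lindström--Gessel--Viennot argument and it is correct: the Leibniz expansion of the minor into a signed sum over pairs (permutation, path family), the tail-swap involution on intersecting families, and the planarity argument forcing $\sigma=\mathrm{id}$ on vertex-disjoint families are exactly the right three steps, and the nonnegativity claim follows because the surviving sum is a cardinality. The one place to be a touch more explicit is the well-definedness of $\Phi$: after swapping the tails of $P_k$ and $P_\ell$ at $v$, the union of vertex sets of the two modified paths is unchanged, so every $P_j$ with $j<k$ remains disjoint from all other paths, $v$ is still the first intersection on the new $P_k$, and the set of indices $\ell'>k$ whose path passes through $v$ is unchanged; hence $\Phi$ reproduces the same triple $(k,v,\ell)$ and is an involution. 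You gesture at this in your final paragraph, and once it is spelled out the proof is complete. (One also implicitly uses that paths in a directed acyclic graph are simple, so ``first vertex along $P_k$'' makes sense, and that in the planar embedding the sources and sinks occur in order along the boundary, which is part of the definition of a planar network in \cite{Sk}.)
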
 

If we allow weights on paths then even more is true.

\begin{theorem} (Brenti, \cite{Bre})
Every totally nonnegative matrix is the weighted path matrix of some 
planar network. 
\end{theorem}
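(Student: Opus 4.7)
The plan is to reduce the theorem to a factorization result: every totally nonnegative $n\times n$ matrix $M$ can be written as a product of nonnegative elementary bidiagonal matrices and a nonnegative diagonal matrix. Once that factorization is available, the theorem follows by exhibiting each factor as the weighted path matrix of a very simple planar network and by showing that matrix multiplication corresponds to horizontal concatenation of planar networks.

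First I would treat the building blocks. For each $i\in\{1,\dots,n-1\}$ and $t\ge 0$, the lower elementary matrix $E_i^{-}(t):=I+t\,e_{i+1,i}$ is the weighted path matrix of the planar network on $n$ sources and $n$ sinks consisting of $n$ horizontal ``wires'' of weight $1$, plus one additional oblique edge of weight $t$ joining wire $i$ to wire $i+1$. The upper elementary $E_i^{+}(t)=I+t\,e_{i,i+1}$ is handled symmetrically, and a diagonal matrix $\diag(d_1,\dots,d_n)$ with $d_j\ge 0$ is realised by $n$ disjoint weighted horizontal edges. Each of these is manifestly the path matrix of its little network (with weights multiplied along each path and summed over families of paths).

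Next, I would verify the concatenation principle: if $M$ and $M'$ are the weighted path matrices of planar networks $\Gamma$ and $\Gamma'$ with compatible numbers of sinks and sources, then $MM'$ is the weighted path matrix of the network $\Gamma\ast\Gamma'$ obtained by identifying the sinks of $\Gamma$ with the sources of $\Gamma'$ in order. This is immediate from $(MM')_{ij}=\sum_k M_{ik}M'_{kj}$ once one notes that a path from $s_i$ to $t_j$ in $\Gamma\ast\Gamma'$ factors uniquely at the junction column, and that edge-weights multiply along paths. Combined with the previous step, any product of nonnegative elementary bidiagonals and a nonnegative diagonal matrix is realised by a planar network.

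The real obstacle is the factorization itself: one must show that every totally nonnegative $M$ admits a decomposition
\[
M \;=\; E_{i_1}^{-}(s_1)\cdots E_{i_r}^{-}(s_r)\cdot D\cdot E_{j_1}^{+}(t_1)\cdots E_{j_s}^{+}(t_s)
\]
with all $s_k,t_l,d_m\ge 0$. This is the classical Loewner--Whitney-type theorem (Whitney in the totally positive case, Cryer in the totally nonnegative case). I would prove it by induction on $n$ via a Neville-elimination algorithm: sweep from the bottom to create zeros below the diagonal one subdiagonal entry at a time by left multiplication by an appropriate $E_i^{-}(s)$, then symmetrically on the right, peeling off a leading diagonal entry and reducing to a smaller TNN matrix. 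The delicate point --- and the main work --- is verifying that each elimination parameter is nonnegative and that the intermediate matrices remain TNN; both facts follow from expressing each elimination parameter as a ratio of contiguous minors of $M$ (themselves nonnegative by hypothesis) and from the fact that multiplication by a nonnegative bidiagonal elementary matrix preserves total nonnegativity, which one checks either directly or, pleasantly, by invoking Lindstr\"om's Lemma in the opposite direction on the building-block networks. Stringing the resulting building-block networks together via the concatenation principle then produces a planar network whose weighted path matrix is $M$.
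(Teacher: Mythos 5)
The paper does not prove this theorem; it simply cites Brenti's article, so there is no in-paper argument to compare against, and your proposal must be judged on its own terms. The overall strategy (realise bidiagonal factors as elementary networks, then use the concatenation principle) is a standard and legitimate route, and the concatenation principle and the building-block realisations are fine. The difficulty is with the factorisation you invoke.

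The specific decomposition you claim,
\[
M = E_{i_1}^{-}(s_1)\cdots E_{i_r}^{-}(s_r)\cdot D\cdot E_{j_1}^{+}(t_1)\cdots E_{j_s}^{+}(t_s),
\]
with all parameters nonnegative, does \emph{not} hold for arbitrary totally nonnegative matrices. Any such product has the form $LDU$ with $L$ lower unitriangular and $U$ upper unitriangular, so $M_{11}=d_1$ and $M_{1j}=d_1 U_{1j}$; thus $M_{11}=0$ forces the whole first row to vanish. But $M=\bigl(\begin{smallmatrix}0&1\\0&0\end{smallmatrix}\bigr)$ is totally nonnegative, has $M_{11}=0$ and $M_{12}=1$, and therefore admits no such factorisation. (It \emph{is} the weighted path matrix of an obvious one-edge network, so the theorem is not in doubt --- only your intermediate lemma is.) The Loewner--Whitney theorem in the elementary-bidiagonal form you quote is a statement about totally \emph{positive} (or at least invertible TNN) matrices; what Cryer proves in the singular TNN case is a factorisation $M=LU$ with $L,U$ triangular TNN, and further into \emph{full} bidiagonal TNN factors (allowing zeros on the diagonal), not into elementary unitriangular ones with a single diagonal in the middle. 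Your Neville-elimination sketch also breaks down exactly at this point: when a pivot is zero but the corresponding row or column is not, no left- or right-multiplication by an $E_i^{\pm}(s)$ can create the needed zero, and you have not said what to do then. Finally, the theorem in the paper is stated for all TNN matrices, including rectangular ones; reducing $m\times p$ to square by padding with zero rows or columns lands you precisely in the singular square case, so this is not a side issue.

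The fix is modest but necessary: replace the elementary-bidiagonal-plus-one-diagonal decomposition by Cryer's factorisation into (nonelementary) lower and upper bidiagonal TNN matrices, which is valid for all square TNN matrices including singular ones. Each such bidiagonal factor is still trivially the weighted path matrix of a one-column planar network ($n$ horizontal wires with the diagonal entries as edge weights, plus oblique edges carrying the off-diagonal weights), so your concatenation argument then goes through unchanged; and the rectangular case follows by zero-padding and discarding the unused sources or sinks. Alternatively, one can avoid factorisation entirely and argue as Brenti does, but as written your proof has a real hole for singular and rectangular matrices.
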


\begin{example} 
This example is taken from \cite{Sk}. The path matrix 
\[
 M= \left(\begin{array}{cccc}
5&6&3&0\\
4&7&4&0\\
1&4&4&2\\
0&1&2&3
\end{array}\right)
\]
of the planar network in Figure \ref{fig:planarnet} is the matrix of Example~\ref{example-4x4}. Thus, $M$ is totally
nonnegative, by Lindstr\"om's Lemma.\fin 
\end{example} 


\subsection{Cell decomposition}

Our main concern in this section is to consider the possible patterns of zeros
that can occur as the values of the minors of a totally nonnegative matrix.
The following example shows that one cannot choose a subset of minors
arbitrarily and hope to find a totally nonnegative matrix for which the chosen 
subset is precisely the subset of minors with value zero.

\begin{example}\label{example:celld}\rm  There is no $2\times 2$ totally nonnegative matrix 
$\left( \begin{array}{cc} a&b\\c&d\end{array} \right)$ with $d=0$, but the
other four minors nonzero.
For, suppose that $\left( \begin{array}{cc} a&b\\c&d\end{array} \right)$ is
TNN and $d=0$. 
Then $a,b,c\geq 0$ and also $ad-bc\geq 0$. 
Thus, $-bc \geq 0$
and hence $bc =0$ so that $b=0$ or $c=0$. \fin\\
\end{example}

Let $\mtnnmp$ be the set of totally nonnegative $m\times p$ real matrices.
Let $Z$ be a subset of minors. The {\em cell} $S_Z^o$  is 
the set of matrices 
in $\mtnnmp$ for which the minors in $Z$ are zero (and those not in $Z$ are 
nonzero). Some cells may be empty. The space $\mtnnmp$ is partitioned by 
the nonempty cells. \\


\begin{exercise} \label{exercise-14}\rm
Show that there are $14$ nonempty cells in $\mathcal{M}_2^{\tnn}$. \fin 
\end{exercise} 

The question is then to describe the patterns of minors that represent nonempty cells
in the space of totally nonnegative matrices.  In \cite{Pos}, Postnikov defines {\em Le-diagrams} 
to solve this problem. An $m\times p$ array with entries either $0$ or $1$ is said to be a 
{\em 
Le-diagram} if it satisfies the following rule: if there is a $0$ in a given
square then either each square to the left is also filled with $0$ or each
square above is also filled with $0$. 

Here are an 
example and a non-example of a Le-diagram on a $5\times 5$ array.

\[ 
\young(11010,00010,11110,00010,11110)
\qquad\qquad
\young(11010,00101,11101,00111,11111)
\]

%



%
\begin{theorem} (Postnikov) 
\label{thm:postnikov}
There is a bijection between Le-diagrams on an 
$m\times p$ array and nonempty cells $S^{\circ}_Z$ in
$\mtnnmp$. 
\end{theorem}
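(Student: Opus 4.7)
The plan is to establish the bijection by constructing a map from Le-diagrams to nonempty cells via a canonical family of planar networks (``Le-networks'') and then showing both that every nonempty cell is hit and that different Le-diagrams land in different cells. Given a Le-diagram $L$ on an $m\times p$ array, I would build a planar network $N_L$ by placing an internal vertex in each box labeled $1$, joining each such vertex horizontally to the next $1$-box to its right and vertically to the next $1$-box below it, and then attaching $m$ sources on the left and $p$ sinks along the bottom of the array. All edges are oriented so that paths travel rightward and downward. A positive real weight $x_e>0$ is assigned to each edge, yielding a weighted path matrix $M(N_L;x)$. By Lindstr\"om's Lemma (Theorem~\ref{lindstrom}), $M(N_L;x)\in\mtnnmp$ for every $x\in(\mathbb{R}_{>0})^k$, where $k$ is the number of edges of $N_L$.

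The next step is to show that, as the weights $x$ vary, the matrices $M(N_L;x)$ all lie in a single cell $S^\circ_{Z_L}$, and to identify $Z_L$ combinatorially. By the refined version of Lindstr\"om's formula, a minor $[I\mid J]$ evaluates to a sum over families of non-intersecting paths in $N_L$ from the sources indexed by $I$ to the sinks indexed by $J$, with each summand a monomial in the $x_e$ having positive coefficient. Hence $[I\mid J]$ vanishes identically on the image exactly when no such family exists in $N_L$; otherwise $[I\mid J]$ is a non-zero polynomial that is strictly positive on $(\mathbb{R}_{>0})^k$. This pins down $Z_L$ as a purely combinatorial invariant of $L$ and yields a well-defined map $L\mapsto S^\circ_{Z_L}$.

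To complete the bijection I would prove surjectivity by invoking Brenti's theorem: any $A\in\mtnnmp$ is realized as the weighted path matrix of some planar network $N$. I then reduce $N$ to a Le-network by a sequence of local moves (absorbing redundant crossings, collapsing parallel edges, and repositioning vertices into a staircase pattern) that do not alter the path matrix; the terminal configuration is forced to satisfy the Le-rule, because any violation --- an empty box with $1$-boxes both above and to its left --- would permit a further simplification. Injectivity follows by exhibiting, for each $1$-box $b$ of $L$, a distinguished minor whose non-vanishing detects the presence of $b$, so that $L$ can be reconstructed from $Z_L$.

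The hardest step, I expect, is surjectivity: carrying out the network reduction in a way that genuinely preserves all minors (not just the matrix entries) and verifying that the Le-rule is precisely the obstruction to further simplification. The restoration-algorithm techniques developed in \cite{GLL,GLL2} provide a systematic framework for navigating this obstacle, since they produce, from any TNN matrix, a canonical factorization from which the underlying Le-diagram can be read off directly.
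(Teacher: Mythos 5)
The paper does not prove this theorem; it attributes it to Postnikov \cite{Pos} and only sketches the forward map (Le-diagram $\to$ hook-based planar network $\to$ path matrix $\to$ nonempty cell), so your real comparison target is Postnikov's original argument. Your version of the forward map is already broken at the first step. You attach sources on the left and sinks along the bottom with paths oriented rightward and downward, but this boundary arrangement is not compatible with the Lindstr\"om--Gessel--Viennot lemma: going around the boundary, the sources and sinks appear in the same cyclic order, rather than the nested order that forces nonnegativity, so the path matrix need not be TNN. Concretely, for the $2\times2$ Le-diagram with a single $0$ in position $(1,1)$ (which satisfies the Le-rule vacuously), your network produces the path matrix with $m_{1,1}=0$ and $m_{1,2}=m_{2,1}=m_{2,2}=1$, whose determinant is $-1$. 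The paper's description of Postnikov's construction places the sources at the \emph{right} end of each row, orients paths leftward then downward, and yields on that same diagram the TNN matrix with all four entries equal to $1$. So as written, the map $L\mapsto S^{\circ}_{Z_L}$ does not even land in $\mtnnmp$.

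Even granting the corrected orientation, the surjectivity and injectivity steps are asserted rather than proved. ``Reduce a Brenti network to a Le-network by local moves that preserve the path matrix, and any violation of the Le-rule permits a further simplification'' is a substantive theorem, not a routine normalisation: Postnikov's actual proof runs through the boundary measurement map, positroids, plabic graphs and their move-equivalence, and Grassmann necklaces, and the existence, termination and correctness of such a reduction is exactly what that machinery delivers. Likewise, ``for each $1$-box $b$ there is a distinguished minor whose nonvanishing detects $b$'' is plausible but neither exhibited nor proved, and it is the crux of injectivity. Your closing suggestion to fall back on the restoration/deleting-derivations algorithm of \cite{GLL,GLL2} is, in the context of this paper, the cleanest route: those papers show that a real matrix is TNN if and only if the deleting-derivations output is entrywise nonnegative with zero set a Cauchon diagram, which simultaneously gives well-definedness, surjectivity and injectivity. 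But that is a genuinely different mechanism from the ad hoc network reduction you describe, and it still needs to be carried out rather than merely invoked.
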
 

In fact, Postnikov proves this theorem for the totally nonnegative grassmannian, and we are interpreting the result on the big cell, which is 
the space of totally nonnegative matrices. 

In view of Exercise~\ref{exercise-14}, there should be $14$ $2\times 2$
Le-diagrams.

\begin{exercise} \label{exercise-16-to-14}\rm 
The $16$ possible fillings of a $2\times 2$ array with either $0$ or $1$ are
shown in Figure 2. Identify the two non-Le-diagrams.\fin

\begin{figure}
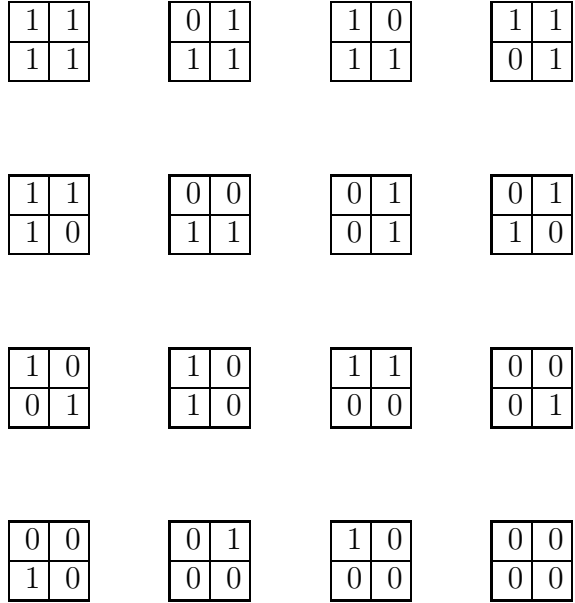

\begin{center}
$\begin{array}{lcclcclccl}
\begin{tabular}{| b{1mm} | b{1mm} | }
    \hline
     1 & 1\\
    \hline
    1   & 1 \\
    \hline
  \end{tabular} & \hspace{3ex} & &
\begin{tabular}{| b{1mm} | b{1mm} | }
    \hline
     0 & 1\\
    \hline
    1   & 1 \\
    \hline
  \end{tabular} & \hspace{3ex} & &
\begin{tabular}{| b{1mm} | b{1mm} | }
    \hline
     1 & 0\\
    \hline
    1   & 1 \\
    \hline
  \end{tabular}& \hspace{3ex} &  & \begin{tabular}{| b{1mm} | b{1mm} | }
    \hline
     1 & 1\\
    \hline
    0   & 1 \\
    \hline
  \end{tabular} \\

\end{array}$
\end{center}

\vspace{2ex}

\begin{center}
$\begin{array}{lcclcclccl}
\begin{tabular}{| b{1mm} | b{1mm} | }
    \hline
     1 & 1\\
    \hline
    1   & 0 \\
    \hline
  \end{tabular} & \hspace{3ex} & &
\begin{tabular}{| b{1mm} | b{1mm} | }
    \hline
     0 & 0\\
    \hline
    1   & 1 \\
    \hline
  \end{tabular} & \hspace{3ex} & &
\begin{tabular}{| b{1mm} | b{1mm} | }
    \hline
     0 & 1\\
    \hline
    0   & 1 \\
    \hline
  \end{tabular}& \hspace{3ex} &  & \begin{tabular}{| b{1mm} | b{1mm} | }
    \hline
     0 & 1\\
    \hline
    1  & 0 \\
    \hline
  \end{tabular} \\

\end{array}$
\end{center}

\vspace{2ex}

\begin{center}
$\begin{array}{lcclcclccl}
\begin{tabular}{| b{1mm} | b{1mm} | }
    \hline
     1 & 0\\
    \hline
    0  & 1 \\
    \hline
  \end{tabular} & \hspace{3ex} & &
\begin{tabular}{| b{1mm} | b{1mm} | }
    \hline
     1 & 0\\
    \hline
    1   & 0 \\
    \hline
  \end{tabular} & \hspace{3ex} & &
\begin{tabular}{| b{1mm} | b{1mm} | }
    \hline
     1 & 1\\
    \hline
    0  & 0 \\
    \hline
  \end{tabular}& \hspace{3ex} &  & \begin{tabular}{| b{1mm} | b{1mm} | }
    \hline
     0 & 0\\
    \hline
    0   & 1 \\
    \hline
  \end{tabular} \\

\end{array}$
\end{center}

\vspace{2ex}

\begin{center}
$\begin{array}{lcclcclccl}
\begin{tabular}{| b{1mm} | b{1mm} | }
    \hline
     0 & 0\\
    \hline
    1   & 0 \\
    \hline
  \end{tabular} & \hspace{3ex} & &
\begin{tabular}{| b{1mm} | b{1mm} | }
    \hline
     0 & 1\\
    \hline
    0  & 0 \\
    \hline
  \end{tabular} & \hspace{3ex} & &
\begin{tabular}{| b{1mm} | b{1mm} | }
    \hline
     1 & 0\\
    \hline
    0  & 0 \\
    \hline
  \end{tabular}& \hspace{3ex} &  & \begin{tabular}{| b{1mm} | b{1mm} | }
    \hline
     0 & 0\\
    \hline
    0   & 0 \\
    \hline
  \end{tabular} \\

\end{array}$
\end{center}\caption{$2\times 2$ Le-diagrams} 
\end{figure}
\end{exercise} 


In \cite{Pos}, Postnikov describes an algorithm that starts with a Le-diagram
and produces a planar network from which one generates a totally nonnegative
matrix which defines a nonempty cell in the space of totally nonnegative
matrices. The procedure to produce the planar network is as follows. In each
1 box of the Le-diagram, place a black dot. From each black dot draw a
hook which goes to the right end of the diagram and the bottom of the diagram.
Label the right ends of the horizontal part of the hooks as the sources of a
planar network, numbered from top to bottom, and label the bottom ends of the
vertical part of the hooks as the sinks, numbered from left to right. Then
consider the resulting graph to be directed by allowing movement from right to
left along horizontal lines and top to bottom along vertical lines. By Lindstr\"om's Lemma (see Theorem \ref{lindstrom}) the path matrix of this planar network is a totally nonnegative matrix, and so the pattern of its zero minors produces a nonempty cell in the space of totally nonnegative matrices. The above procedure that associates to any Le-diagram a nonempty cell provides a  bijection between the set of $m \times p$ Le-diagrams and nonempty cells in the space of totally nonnegative $m \times p$ matrices (see Theorem \ref{thm:postnikov}).  
We illustrate Postnikov's procedure with an example.

\begin{example}\label{example-postnikov-algorithm} 
The Le-diagram  
\begin{center}
{\Huge 
\young(101,001,111)
}
\end{center}
produces the following planar network

$$
\xymatrix{
 \bullet \ar@{->}[dd]& & \bullet \ar@{->}[ll]\ar@{->}[d] & s_1\ar@{->}[l]   \\
& & \bullet \ar@{->}[d] & s_2\ar@{->}[l] \\
\bullet \ar@{->}[d]& \bullet \ar@{->}[l]\ar@{->}[d]& \bullet\ar@{->}[l]\ar@{->}[d] & s_3\ar@{->}[l]\\
t_1 & t_2 & t_3 & \\
}$$
where $s_1$, $s_2$ and $s_3$ are the sources and $t_1$, $t_2$ and $t_3$ are the sinks. The path matrix of this planar network is
\[
\left(\begin{array}{ccc}
2&1&1\\
1&1&1\\
1&1&1
\end{array}\right) .
\]
The minors that vanish on this matrix are:
$$[1,2|2,3],[1,3|2,3],[2,3|2,3],[2,3|1,3],[2,3|1,2],[1,2,3|1,2,3].$$
The cell associated to this family of minors is nonempty and this is the 
nonempty cell associated to the Le-diagram above. \fin\\
\end{example}  

In fact, by allowing suitable weights on the edges of the above planar
network, one can obtain all of the matrices in this cell as weighted path
matrices of the planar network. 


\section{Quantum matrices}

We denote by $R:=\mmpc$ the standard quantisation of the ring of
regular functions on $m \times p$ matrices with entries in $\bbK$; 
the algebra $R$ is
the $\bbK$-algebra generated by the $m \times p $ indeterminates
$X_{\ia}$, for 
$1 \leq i \leq m$ and $ 1 \leq \alpha \leq p$, subject to the
following relations:
 \[
\begin{array}{ll}
X_{i, \beta}X_{i, \alpha}=q^{-1} X_{i, \alpha}X_{i ,\beta},
& (\alpha < \beta); \\
X_{j, \alpha}X_{i, \alpha}=q^{-1}X_{i, \alpha}X_{j, \alpha},
& (i<j); \\
X_{j,\beta}X_{i, \alpha}=X_{i, \alpha}X_{j,\beta},
& (i <j,\;  \alpha > \beta); \\
X_{j,\beta}X_{i, \alpha}=X_{i, \alpha} X_{j,\beta}-(q-q^{-1})X_{i,\beta}X_{j,\alpha},
& (i<j,\;  \alpha <\beta). 
\end{array}
\]

It is well known that $R$ can be presented as an iterated Ore extension over
$\bbK$, with the generators $X_{\ia}$ adjoined in lexicographic order.
Thus, the ring $R$ is a noetherian domain; its skew-field of
fractions is denoted by $F$ or ${\rm F}(R)$. 
In the case that  $q$ is not a root of unity, it follows from
\cite[Theorem 3.2]{GoLet1} that all prime ideals of $R$ are completely
prime. In this survey, we will assume that $q$ is not a root of
unity.

Let $K$ be a $\bbC$-algebra and
$M=(x_{i,\alpha})\in \mc_{m,p}(K)$. If $I \subseteq \onem$ and $\Lambda
\subseteq \onep$ with $|I|=|\Lambda |=t \geq 1$, then we denote by $[I |
\Lambda ]_q(M)$ the corresponding \emph{quantum minor} of $M$. This is the element of
$K$ defined by: 
$$[I | \Lambda ]_q(M)= [i_1, \dots, i_k|\alpha_1 , \dots , \alpha_k]_q :=\sum_{\sigma \in S_k} (-q)^{l(\sigma)}
x_{i_1, \alpha_{\sigma (1)}} \cdots x_{i_k, \alpha_{\sigma (k)}},$$ 
where
$I=\{i_1, \dots, i_k\}$, $\Lambda=\{\alpha_1 , \dots , \alpha_k\}$ and
$l(\sigma)$ denotes the length of the 
$k$-permutation $\sigma$. Also, it is convenient
to allow the empty minor: $[\emptyset|\emptyset]_q(M) := 1 \in K$.
Whenever we write a quantum minor in the form $[i_1, \dots, i_k|\alpha_1 , \dots , \alpha_k]_q$, we tacitly assume that the row
and column indices are listed in ascending order, that is, $i_1< \cdots< i_l$
and $\alpha_1< \cdots< \alpha_l$. 

The \emph{quantum minors} in $R$ are the quantum minors of the matrix $(X_{\ia}) \in \mc_{m,p}(R)$. To simplify the notation, we denote by $[I | \Lambda ]_q$ the quantum minor of $R$ associated to the row-index set $I$ and the column-index set $\Lambda$.

It is easy to check that the torus $\hc:=\left( \bbK^\times
\right)^{m+p}$ acts on $R$ by $\bbK$-algebra automorphisms via:
$$(a_1,\dots,a_m,b_1,\dots,b_p).X_{\ia} = a_i b_\alpha X_{\ia} \quad {\rm
for~all} \quad \: (\ia)\in \gc 1,m \dc \times \gc 1,p \dc.$$ 
We refer to this action as the \emph{standard action} of $\left( \bbK^\times
\right)^{m+p}$ on $\mmpc$.
Recall that an \emph{$\hc$-prime ideal} of $R$ is a proper $\hc$-invariant ideal $P$ such that whenever $P$ contains a product $IJ$ of two $\hc$-invariant ideals, it must contain either $I$ or $J$. As $q$ is not a root of unity, it follows from
\cite[5.7]{GoLet2} that there are only finitely many $\hc$-primes in $R$ and that every
$\hc$-prime is completely prime. Hence, the $\hc$-prime ideals of $R$ coincide with the $\hc$-invariant primes. We denote by
$\hspec(R)$ the set of $\hc$-primes of $R$. 

The aim is to parameterise/study the $\hc$-prime ideals in quantum
matrices. 

\begin{example}\label{example:quantumcelld}
The algebra of $2\times 2$ quantum matrices may be presented as 
\[
\oqmtwo := \bbC \left[
\begin{array}{cc}
a & b \\
c &d  \\
\end{array} \right]
\]
with relations   
\[
 ab = qba \qquad ac =qca \qquad bc =cb
\]
\[
bd = qdb \qquad cd = qdc \qquad ad-da = (q-q^{-1})bc.
\]
The {\em quantum determinant} is $D_q:= [12|12]_q=ad-qbc$.

Let $P$ be a prime ideal that contains $d$. Then 
\[
(q-q^{-1})bc= ad-da \in P
\]
and, as $0\neq (q-q^{-1})\in \bbC$ and $P$ is completely prime, we deduce
that either $b\in P$ or $c\in P$. Thus, there is no prime ideal in
$\oqmtwo$ such that $d$ is the only quantum minor that is in $P$. \fin\\

\end{example} 

You should notice the analogy with the corresponding result in the
space of $2\times 2$ totally nonnegative matrices: the cell corresponding to $d$ being the only vanishing minor is empty (see Example \ref{example:celld}). 

\subsection{$\hc$-primes and Cauchon diagrams.}
\label{sectionCauchondiagram}

In \cite{Cau2}, Cauchon showed that his theory of deleting derivations can be applied to the iterated Ore extension $R$. As a consequence, he was able 
to parametrise the set $\hc$-$\mathrm{Spec}(R)$ in terms of combinatorial objects called {\it Cauchon diagrams}.

\begin{definition}\cite{Cau2} {\rm
An $m\times p$ \emph{Cauchon diagram} $C$ is simply an $m\times p$ grid consisting of $mp$ squares in which certain squares are coloured black.  We require that the collection of black squares have the following property: If a square is black, then either every square strictly to its left is black or every square strictly above it is black.

We denote by $\mathcal{C}_{m,p}$ the set of $m\times p$ Cauchon diagrams.
}
\end{definition}

Note that we will often identify an $m \times p $ Cauchon diagram with the set of coordinates of its black boxes. Indeed, if $C \in \mathcal{C}_{m,p}$ and  $(\ia)
 \in \gc 1, m\dc \times \gc 1,  p\dc$, we will say that $(\ia) \in C$ if the box in row $i$ and column $\alpha$ of $C$ is black.

\begin{figure}[h]
\label{fig:CauchonDiagram}
\begin{center}
$$\xymatrixrowsep{0.01pc}\xymatrixcolsep{0.01pc}
\xymatrix{
\plc\edge[0,10]\edge[8,0] &&\plc\edge[8,0] &&\plc\edge[8,0]
&&\plc\edge[8,0] &&\plc\edge[8,0] &&\plc\edge[8,0] \\
 &\mdblk &&\mdblk &&&&\mdblk && &&\plc \\
\plc\edge[0,10] &&&&&&&&&&\plc \\
 &&&&&&&\mdblk && \\
\plc\edge[0,10] &&&&&&&&&&\plc \\
 &\mdblk &&\mdblk &&\mdblk &&\mdblk &&\mdblk \\
\plc\edge[0,10] &&&&&&&&&&\plc \\
 &\mdblk &&\mdblk &&&&\mdblk \\
\plc\edge[0,10] &&\plc &&\plc &&\plc &&\plc &&\plc 
}$$
\caption{An example of a $4\times 5$ Cauchon diagram}
\end{center}
\end{figure}
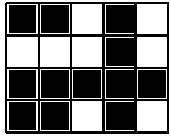

Recall \cite[Corollaire 3.2.1]{Cau2} that Cauchon has constructed (using the deleting derivations algorithm) a bijection between $\hc$-$\mathrm{Spec}(\mmpc)$ and the collection $\mathcal{C}_{m,p}$. As a consequence, Cauchon \cite{Cau2} was able to give a formula for the size of $\hspec (\mmpc)$. This formula was later re-written by Goodearl and McCammond (see \cite{Lau3}) in terms of Stirling numbers of second kind and poly-Bernoulli numbers as defined by Arakawa and Kaneko (see \cite{AK,Kan}).

Notice that the definitions of Le-diagrams and Cauchon diagrams are
the same! Thus, the nonempty cells in totally nonnegative matrices and 
the $\hc$-prime ideals in quantum matrices are parameterised by the
same combinatorial objects. Much more is true, as we will now see. 

For example, $\oqmtwo$ has $14$ $\hc$-prime ideals, as there are $14$ 
Cauchon/Le-diagrams. It is relatively easy to identify these
$\hc$-primes. The following are the $\hc$-prime ideals of $\oqmtwo$.



\[
\xymatrixrowsep{2.4pc}\xymatrixcolsep{4pc}
    \xymatrix{ &&\tbinom{a~b}{c~d}\\
    \tbinom{a~0}{c~d} \edge[urr]
    &\tbinom{0~b}{c~d} \edge[ur]
    &&\tbinom{a~b}{c~0} \edge[ul]
    &\tbinom{a~b}{0~d} \edge[ull]\\
    \tbinom{0~0}{c~d} \edge[u] \edge[ur]
    &\tbinom{a~0}{c~0} \edge[ul] \edge[urr]
    &\tbinom{0~b}{c~0} \edge[ul] \edge[ur]
    &\tbinom{0~b}{0~d} \edge[ull] \edge[ur]
    &\tbinom{a~b}{0~0} \edge[ul] \edge[u]\\
    &\tbinom{0~0}{c~0} \edge[ul] \edge[u] \edge[ur]
    &\left(D_q\right) \edge[ull] \edge[ul] \edge[ur] \edge[urr]
    &\tbinom{0~b}{0~0} \edge[ul] \edge[u] \edge[ur]\\
    &&\tbinom{0~0}{0~0}  \edge[ul]
    \edge[u] \edge[ur] }
  \]
To interpret this picture, note that, for example, 
$\tbinom{a~b}{c~0}$ denotes the ideal generated by 
$a,b$ and $c$. 

It is easy to check that $13$ of the ideals are prime.
For example, let P be the ideal generated by b and d. Then
$\oqmtwo/P \cong \bbC [a, c]$ 
and $\bbC [a, c]$ is an iterated Ore extension of $\bbC$ and so a domain.
The only problem is to show that the determinant generates a
prime ideal. 
This was originally proved by Jordan, and, independently, by
Levasseur and Stafford. A general result that includes this as a
special case is \cite[Theorem 2.5]{GoLen}.

In fact, in the case that the parameter $q$ is
transcendental over ${\mathbb Q}$, 
the first author, \cite{Lau2} has shown that all
$\ch$-prime ideals are generated by the quantum minors that they contain. In
\cite{GLL2}, this result is extended by replacing $\bbC$ by any field of
characteristic zero (still retaining the condition that $q$ is transcendental
over ${\mathbb Q}$). 
The transcendental restriction is technical: at the moment, 
a certain ideal is 
only known to be prime with this restriction. It is expected that the result 
will remain true when $q$ is merely restricted to be not a root of unity.

If you did Exercise~\ref{exercise-14} then you will notice that the
sets of all quantum minors that define $\hc$-prime ideals in $\oqmtwo$  
are exactly the
quantum versions of the sets of vanishing minors for nonempty cells in 
the space of $2\times 2$ totally nonnegative matrices. This
coincidence also occurs in the general case and an explanation of this
coincidence is obtained in \cite{GLL, GLL2}. However, in order to
explain the coincidence, we need to introduce a third setting, that of 
Poisson matrices, and this is done in the next section.



\section{Poisson matrix varieties and their $\hc$-orbits of symplectic leaves}


In this section, we study the standard Poisson structure of the
coordinate ring $\OMmpc$ coming from the commutators of 
$\oqmmpc$. Recall that a
{\it Poisson algebra} (over $\bbC$) is a commutative
$\bbC$-algebra $A$ equipped with a Lie bracket $\{-,-\}$ which is a
derivation (for the associative multiplication) in each variable. The
derivations $\{a,-\}$ on $A$ are called {\it Hamiltonian derivations}. When
$A$ is the algebra of complex-valued $C^{\infty}$ functions on a smooth affine
variety $V$, one can use Hamiltonian derivations in order to define
Hamiltonian paths in $V$. A smooth path
$\gamma : [0,1] \rightarrow V$ is a {\it Hamiltonian path in $V$} 
if there exists $H \in C^{\infty}(V)$ such that for all 
$f \in \mathcal{C}^{\infty}(V)$:
\begin{eqnarray}
\label{eq:sympl-leaves}
\frac{d}{dt}(f \circ \gamma )(t)=\{H, f\} \circ \gamma (t),
\end{eqnarray}
for all $0 < t <1$. In other words, Hamiltonian paths are the integral 
curves (or flows) of the Hamiltonian vector fields induced by the 
Poisson bracket. It is easy to check that the relation 
``connected by a piecewise
Hamiltonian path'' 
is an equivalence relation. The equivalence classes of this
relation are called the {\it symplectic leaves} of $V$; they form a partition
of $V$. 


\subsection{The Poisson algebra $\OMmpc$}

Denote by $\OMmpc$ the coordinate ring of the variety
$\Mmpc$; note that 
$\OMmpc$ is a (commutative) polynomial algebra in
$mp$ indeterminates $Y_{\ia}$ with $1 \leq i \leq m$ and $1 \leq \alpha \leq
p$. 

The variety $\Mmpc$ is a Poisson variety: there is a unique Poisson bracket on the coordinate ring $\OMmpc $ determined by the following data. For all $(\ia) < (k,\gamma)$, we set: 
$$\{Y_{\ia} ,Y_{k,\gamma} \}=\left\{ \begin{array}{ll}
Y_{\ia}Y_{k,\gamma} &  \mbox{ if } i=k \mbox{ and } \alpha < \gamma \\
Y_{\ia} Y_{k,\gamma} &  \mbox{ if } i< k \mbox{ and } \alpha = \gamma \\
0 &   \mbox{ if } i < k \mbox{ and } \alpha > \gamma \\
2 Y_{i,\gamma} Y_{k,\alpha} &  \mbox{ if } i< k  \mbox{ and } \alpha < \gamma .  \\ 
\end{array} \right.$$
This is the standard Poisson structure on
the affine variety $ \Mmpc$ (cf.~\cite[\S1.5]{BGY}); the Poisson algebra structure on $\OMmpc$ is the semiclassical limit
of the noncommutative algebras $\oqmmpc$. Indeed one can easily check that 
\[
\{ Y_{\ia}, Y_{k,\gamma}\} = \frac{[X_{\ia}, X_{k,\gamma}]}{q-1} \mid _{q=1}.
\]

In particular, the Poisson bracket on $\OMtwoc=\mathbb{C} \left[\begin{array}{cc} a & b \\ c & d \end{array} \right]$ is defined by:
\begin{eqnarray*}
\{a, b\}=ab \hspace{1cm} & \{a, c\}=ac & \hspace{1cm} \{b, c\}=0 \\
 \{b, d\}=bd \hspace{1cm} & \{c, d\}=cd & \hspace{1cm} \{a, d\}=2bc.
\end{eqnarray*}

Note that the Poisson bracket on $\OMmpc$ extends uniquely to a Poisson
bracket on $\mathcal{C}^{\infty}(\Mmpc)$, so that $\Mmpc$ can be viewed as a
Poisson manifold. Hence, $\Mmpc$ can be decomposed as the disjoint union of
its symplectic leaves. Before studying symplectic leaves in $\Mmpc$, let us
explicitly describe the Poisson bracket on $\mathcal{C}^{\infty}(\Mtwoc)$. For
all $f,g \in \mathcal{C}^{\infty}(\Mtwoc)$, one has:
\begin{eqnarray*}
\lefteqn{\{f,g\} =}\\ 
&&ab \left( \frac{\partial f}{\partial a} 
\cdot \frac{\partial g}{\partial b}-\frac{\partial f}{\partial b} 
\cdot \frac{\partial g}{\partial a} \right)
+ac \left( \frac{\partial f}{\partial a} 
\cdot \frac{\partial g}{\partial c}-\frac{\partial f}{\partial c} 
\cdot \frac{\partial g}{\partial a} \right)
+bd \left( \frac{\partial f}{\partial b} 
\cdot \frac{\partial g}{\partial d}-\frac{\partial f}{\partial d} 
\cdot \frac{\partial g}{\partial b} \right)\\
& &+cd \left( \frac{\partial f}{\partial c} 
\cdot \frac{\partial g}{\partial d}-\frac{\partial f}{\partial d} 
\cdot \frac{\partial g}{\partial c} \right)
+2bc \left( \frac{\partial f}{\partial a} 
\cdot \frac{\partial g}{\partial d}-\frac{\partial f}{\partial d} 
\cdot \frac{\partial g}{\partial a} \right).
\end{eqnarray*}

We finish this section by proving an analogue of Examples \ref{example:celld} and \ref{example:quantumcelld} in the Poisson setting.

\begin{proposition} 
Let $\mathcal{L}$ be a symplectic leaf such that $d(M)=0$ for all $M \in
\mathcal{L}$. Then, either $b(M)=0$ for all $M \in \mathcal{L}$ 
or $c(M)=0$ for all $M
\in \mathcal{L}$. 
\end{proposition}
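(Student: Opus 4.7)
The plan is to argue in two stages: first deduce from the definition of a symplectic leaf that $bc$ vanishes identically on $\mathcal{L}$, then use the special divisibility of the Poisson brackets $\{-,b\}$ and $\{-,c\}$ to show that the vanishing of $b$ (respectively $c$) at a single point of $\mathcal{L}$ propagates to the whole leaf along piecewise Hamiltonian paths.

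First, observe that because $d\circ\gamma\equiv 0$ for every Hamiltonian path $\gamma$ contained in $\mathcal{L}$, equation~\eqref{eq:sympl-leaves} forces $\{H,d\}\circ\gamma\equiv 0$ for every smooth function $H$ and every such $\gamma$. Evaluating at the initial point yields $\{H,d\}(M)=0$ for every $M\in\mathcal{L}$ and every $H\in\mathcal{C}^{\infty}(\Mtwoc)$. Specialising to $H=a$ and using $\{a,d\}=2bc$ gives $bc(M)=0$ for all $M\in\mathcal{L}$.

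Next, a direct application of the explicit Poisson bracket formula for $\OMtwoc$ recalled just above the statement produces, for every $f\in\mathcal{C}^{\infty}(\Mtwoc)$, the factorisations
\[
\{f,b\} \;=\; b\bigl(a f_a - d f_d\bigr) \qquad\text{and}\qquad \{f,c\} \;=\; c\bigl(a f_a - d f_d\bigr).
\]
Fix any smooth $H$ and let $\gamma:[0,1]\to\mathcal{L}$ be a Hamiltonian path it generates. Then $\phi(t):=b(\gamma(t))$ satisfies the linear scalar ODE $\dot\phi(t)=\phi(t)\,\psi(t)$, where $\psi(t):=a(\gamma(t))H_a(\gamma(t))-d(\gamma(t))H_d(\gamma(t))$ is smooth. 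By uniqueness of solutions to such an ODE, $\phi(0)=0$ forces $\phi\equiv 0$, so any Hamiltonian flow starting at a zero of $b$ stays in the zero locus of $b$; the identical argument works for $c$.

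Finally, pick any $M_0\in\mathcal{L}$. By the first step $bc(M_0)=0$, so either $b(M_0)=0$ or $c(M_0)=0$; say $b(M_0)=0$. Since every point of $\mathcal{L}$ is reachable from $M_0$ by concatenating finitely many Hamiltonian flows, and by the second step $b$ remains zero along each such piece, we conclude $b\equiv 0$ on $\mathcal{L}$ (and symmetrically $c\equiv 0$ on $\mathcal{L}$ if instead $c(M_0)=0$). The only non-routine ingredient is the clean derivation of the divisibility identities $\{f,b\}=b(\cdots)$ and $\{f,c\}=c(\cdots)$; once these are in hand, the remainder is definitional together with standard ODE uniqueness.
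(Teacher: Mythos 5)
Your argument is correct and follows essentially the same route as the paper: establish $bc\equiv 0$ on $\mathcal{L}$, then use the divisibility of $\{H,b\}$ and $\{H,c\}$ by $b$ and $c$ respectively, together with ODE uniqueness, to propagate a zero of $b$ (or $c$) along piecewise Hamiltonian paths. The one cosmetic improvement is in the first step, where you read off $2bc(M)=\{a,d\}(M)=0$ directly from the time derivative at $t=0$ of a Hamiltonian flow of $a$ through $M$, whereas the paper exhibits the explicit flow and splits into the cases $\alpha=0$ and $\alpha\neq 0$.
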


\begin{proof} Let $N=\left( \begin{array}{cc} \alpha & \beta \\ \gamma & 0 \end{array} \right) \in \mathcal{L}$. We first prove that $\beta \gamma=0$. We distinguish between two cases. 

First assume that $\alpha=0$. Then we claim that the path defined by 
$\left( \begin{array}{cc} 0 & \beta \\ 
\gamma & 2\beta\gamma t \end{array} \right)$ 
is a flow of the Hamiltonian vector field associated to $\{a, - \}$, 
and so a Hamiltonian path starting at $N$. 
As $N \in \mathcal{L}$, every point of this Hamiltonian path 
should be in $\mathcal{L}$. In particular, we get that 
$\left( \begin{array}{cc} 0 & \beta \\ 
\gamma & 2\beta\gamma  \end{array} \right) \in \mathcal{L}$. 
As $d(M)=0$ for all $M \in \mathcal{L}$, we get $\beta \gamma =0$ as desired. 

Next assume that $\alpha \neq 0$.  Then we claim that 
$\left( \begin{array}{cc} \alpha & \beta e^{\alpha t} \\ 
\gamma e^{\alpha t} & 
\frac{\beta\gamma}{\alpha}  e^{2\alpha t} \end{array} \right)$ 
is a flow of the Hamiltonian vector field associated to $\{a, - \}$, 
and so a Hamiltonian path starting at $N$. As in the previous case this 
implies $\beta \gamma =0$ as desired. 

Hence, the leaf $\mathcal{L}$ contains a point $N$ of the form 
$$N=\left( \begin{array}{cc} \alpha & \beta \\ 0 & 0 \end{array} \right) \mbox{ or } N=\left( \begin{array}{cc} \alpha & 0 \\ \gamma & 0 \end{array} \right).$$

We prove in the first case that $c(M)=0$ for all $M \in \mathcal{L}$. 
It is enough to prove that if $ \gamma: [0,1] \rightarrow \Mtwoc$ is a Hamiltonian path such that $d(\gamma(t))=0$ for all $t \in [0,1]$ and $c(\gamma(0))=0$, then 
$c(\gamma(t))=0$ for all $t \in [0,1]$. Let $\gamma$ be such a Hamiltonian path. For all $t \in [0,1]$, we set 
$\gamma(t)=  \left( \begin{array}{cc} \gamma_1(t) & \gamma_2(t) \\ \gamma_3(t) & 0 \end{array}\right)$.
It follows from (\ref{eq:sympl-leaves}) that 
\[
\frac{d}{dt}(c \circ \gamma )(t)=\{H, c\} \circ \gamma (t).
\]
Hence as $\gamma_4(t)=0$ for all $t$, we get 
\[ 
\gamma_3'(t) = \gamma_1(t) \gamma_3(t)  \frac{\partial H}{\partial a}(\gamma(t)).
\]
Set $\alpha(t):=\gamma_1(t)   \frac{\partial H}{\partial a}(\gamma(t))$. 
Then 
\[ 
\gamma_3'(t) = \alpha(t) \gamma_3(t) .
\] 
Hence we have 
\[ \gamma_3 (t) = C \exp(\Lambda (t))\]
for all $ t \in [0,1]$, where $C \in \bbC$ and $\Lambda$ is a 
primitive of $\alpha \in C^{\infty}([0,1])$. As $\gamma_3(0)=0$ we must 
have $C=0$, so that $\gamma_3(t)=0$ for all $t$, as desired.
\end{proof}

\subsection{$\hc$-orbits of symplectic leaves in $\Mmpc$}

Notice that the torus $\hc:=\left( \bbC^\times \right)^{m+p}$ acts on
$\OMmpc$ by Poisson automorphisms via: $$(a_1,\dots,a_m,b_1,\dots,b_p).Y_{\ia}
= a_i b_\alpha Y_{\ia} \quad {\rm for~all} \quad \: (\ia)\in \gc 1,m \dc
\times \gc 1,p \dc.$$ 
Note that $\hc$ is acting rationally on $\OMmpc$.

At the geometric level, this action of the algebraic torus $\hc$ comes from the left action of $\hc$ on $\Mmpc$ by Poisson isomorphisms via:
$$(a_1,\dots,a_m,b_1,\dots,b_p).M :=\mbox{diag}(a_1, \dots, a_m)^{-1} \cdot  M \cdot \mbox{diag}(b_1, \dots, b_p)^{-1}.$$
This action of $\hc$ on $\Mmpc$ induces an action of
$\hc$ on the set $\mathrm{Sympl}(\Mmpc)$ of symplectic
leaves in $\Mmpc$ (cf.~\cite[\S0.1]{BGY}). As in \cite{BGY}, we view the
$\hc$-orbit of a symplectic leaf $\lc$ as the set-theoretic union
$\bigcup_{h\in\hc} h.\lc \subseteq \Mmpc$, rather than as the family $\{h.\lc \mid
h\in\hc\}$. We denote the set of such orbits by
$\hc$-$\mathrm{Sympl}(\Mmpc)$.

As the symplectic leaves of $\Mmpc$ form a partition of $\Mmpc$, so too 
do the
$\hc$-orbits of symplectic leaves.

\begin{example} The symplectic leaf $\mathcal{L}$ containing $\left( \begin{array}{cc}1 & 1 \\ 1 & 1 \end{array}\right)$ is the set $\mathcal{E}$ of those $2 \times 2$ complex matrices $M=\left( \begin{array}{cc} x & y \\ z & t \end{array}\right)$ with $y-z=0$, $xt-yz=0$ and $y\neq 0$. In other words, 
\[
\mathcal{E}:= \{ M \in \Mtwoc ~|~ \Delta (M)= 0 \mbox{, } (b-c)(M)=0 
\mbox{ and } b(M) \neq 0\},
\] 
where $a,b,c,d$ denote the canonical generators of the coordinate ring of $\Mtwoc$ and $\Delta:= ad-bc$ is the determinant function. It easily follows from this that the $\hc$-orbit of symplectic leaves in $\Mtwoc$ that contains the point $\left( \begin{array}{cc} 1 & 1 \\ 1 & 1 \end{array} \right)$ is the set of those $2 \times 2$ matrices $M$
with $\Delta(M)=0$ and $b(M).c(M) \neq 0$. Moreover the closure of this $\mathcal{H}$-orbit coincides with the set of those $2 \times 2$ matrices $M$ with $\Delta(M)=0$. 
\fin\\
\end{example}

The $\hc$-orbits of symplectic leaves in $\Mmpc$ have been explicitly 
described by Brown, Goodearl and Yakimov in \cite{BGY}. The following result was proved in \cite[Theorems 3.9, 3.13, 4.2]{BGY}.
\begin{theorem}
Set $$\mathcal{S}= \{w \in
S_{m+p} \ | \ -p \leq w(i) -i \leq m \mbox{ for all }i=1,2, \dots, m+p\} . $$
\begin{enumerate}
\item The $\hc$-orbits of symplectic leaves in
$\mc_{m,p}(\bbC)$ are smooth irreducible locally
closed subvarieties. 
\item There is an explicit $1 : 1$ correspondence between $\mathcal{S}$ and $\hc$-$\mathrm{Sympl}(\mathcal{M}_{m,p}(\mathbb{C}))$. 
\item Each $\hc$-orbit is defined by some rank conditions.
\end{enumerate}
\end{theorem}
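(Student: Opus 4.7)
The plan is to realize $\Mmpc$, with its standard Poisson structure, as a Poisson subvariety of a larger Poisson--Lie group where the symplectic foliation is already classified, and then to read off the $\hc$-orbit structure on this subvariety. First I would embed $\Mmpc$ into $G := GL_{m+p}(\bbC)$ via the block-matrix map
\[
M \longmapsto \begin{pmatrix} I_m & 0 \\ M & I_p \end{pmatrix},
\]
and verify, by a direct bracket-by-bracket computation on coordinate functions, that the pullback of the standard Sklyanin Poisson structure on $G$ restricts to the Poisson bracket on $\OMmpc$ described in Section 3.1. Simultaneously I would identify $\hc = (\bbC^\times)^{m+p}$ with the rank-$(m+p)$ subtorus of $T \times T \subset G \times G$ acting by left and right diagonal scaling, so that the $\hc$-action from the theorem is the restriction of a standard Poisson--Lie torus action.

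The next step is to apply the general classification: for $G$ endowed with its standard multiplicative Poisson structure, the symplectic leaves are precisely the double Bruhat cells $G^{u,v} = BuB \cap B_- v B_-$, indexed by pairs $(u,v) \in W \times W$ where $W = S_{m+p}$ (work of Hodges--Levasseur and Kogan--Zelevinsky). Intersecting this stratification with the image of $\Mmpc$ produces a decomposition of $\Mmpc$ whose pieces are the intersections of symplectic leaves with the embedded matrix variety. Each such piece is smooth, irreducible, and locally closed, since these properties are inherited from double Bruhat cells intersected with a closed Poisson subvariety; this yields part (1) of the theorem.

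For the bijection in part (2), I would analyse how the $\hc$-action amalgamates these leaves. Left-right diagonal scaling moves $G^{u,v}$ only within a single $\hc$-orbit, and a careful bookkeeping shows that on $\Mmpc$ the two-parameter datum $(u,v)$ collapses to a single permutation $w \in S_{m+p}$ built out of the relative positions of $u$ and $v$ on the blocks $\{1,\dots,m\}$ and $\{m+1,\dots,m+p\}$. The restriction $-p \leq w(i) - i \leq m$ is forced precisely by the block form of the embedding: the $I_m$ and $I_p$ identity blocks constrain which entries $(i,w(i))$ can lie in the Schubert support. Finally, part (3) falls out of the standard description of Schubert cells: a matrix belongs to the leaf indexed by $w$ if and only if for each $(i,\alpha)\in\onemxp$, the rank of the submatrix $M[1,\dots,i\mid 1,\dots,\alpha]$ equals a specific integer determined combinatorially by $w$.

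The main obstacle is the combinatorial analysis in the third paragraph: identifying the precise contraction $(u,v) \mapsto w$ and verifying that the resulting assignment $w \mapsto \hc \cdot G^{u_w, v_w}$ is well-defined, injective, and surjective onto $\hc\text{-}\mathrm{Sympl}(\Mmpc)$, with image exactly $\mathcal{S}$. This requires a delicate interplay between the Bruhat order on $S_{m+p}$ and the torus scaling action, together with a case analysis of which restricted permutations actually appear once one passes from the full flag variety down to the open cell corresponding to $\Mmpc$; everything else in the argument is essentially a translation of known Poisson--Lie theory through the block embedding.
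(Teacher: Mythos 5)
The survey paper does not actually prove this theorem; it is cited directly from Brown--Goodearl--Yakimov \cite{BGY}, so the fair comparison is with their argument. Your overall strategy --- embed $\Mmpc$ into $GL_{m+p}(\bbC)$ as a Poisson subvariety of the Poisson--Lie group, transport the known classification of the symplectic foliation, and then track the torus action --- is indeed in the same spirit as the argument in \cite{BGY}. However, there is a genuine error at the pivotal citation: for $G=GL_{m+p}$ with the standard (Sklyanin) multiplicative Poisson structure, the symplectic leaves are \emph{not} the double Bruhat cells $G^{u,v}=BuB\cap B_-vB_-$. What Hodges--Levasseur and Kogan--Zelevinsky actually show is that each non-empty $G^{u,v}$ is a disjoint \emph{union} of symplectic leaves, and that it is precisely the $T$-orbit of one of its leaves (here $T$ is a maximal torus); equivalently, the double Bruhat cells parametrize $T$-orbits of leaves, not leaves. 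This distinction is exactly the one the theorem is about, so it cannot be swept under the rug. The argument can in principle be repaired --- since the theorem concerns $\hc$-orbits of leaves, you can work directly with the $T$-orbit stratification --- but then you must carefully reconcile the ambient $T$-action on $G$ with the $\hc$-action on the embedded $\Mmpc$, and verify that intersecting a $T$-orbit of leaves with a ($T$-invariant, Poisson) subvariety produces a single $\hc$-orbit of leaves of the subvariety rather than a union; this is one of the technical points \cite{BGY} has to handle.

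Two smaller remarks. First, your block embedding $M\mapsto\left(\begin{smallmatrix}I_m&0\\M&I_p\end{smallmatrix}\right)$ has a size mismatch ($M$ is $m\times p$ but the lower-left block of an $(m+p)\times(m+p)$ matrix must be $p\times m$); presumably you intend the upper-right block or a transpose. Second, you correctly flag the contraction $(u,v)\mapsto w$ and the identification of the image with $\mathcal{S}$ as the combinatorial heart of the proof, but you leave it entirely unspecified --- in \cite{BGY} this is where most of the work lies, including the derivation of the rank conditions in part (3) from the Fulton-style description of Schubert varieties. As submitted, the proposal is a plausible roadmap with one wrong signpost and the hard middle section missing.
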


The rank conditions that define the $\hc$-orbits of symplectic leaves and their closures are explicit in \cite{BGY}. The reader is  refered to \cite{BGY} for more details.

For $w \in \mathcal{S}$, we denote by $\mathcal{P}_{w}$ the $\hc$-orbit
of symplectic leaves associated to the restricted permutation $w$.

Before going any further let us look at a special case. In the $2 \times 2$ case, the Theorem of Brown, Goodearl and Yakimov asserts that there is a $1:1$ correspondence between 
\[
\mathcal{S}= \{w \in
S_{4} \ | \ -2 \leq w(i) -i \leq 2 \mbox{ for all }i=1,2,3,4\}.
\] and $\hc$-$\mathrm{Sympl}(\mathcal{M}_{2}(\mathbb{C}))$. 
In other words, there is a bijection between the set of those permutations $w$ in $S_4$ such that $w(1) \neq 4$ and $w(4) \neq 1$. 
One may be disappointed not to retrieve $2 \times 2 $ Cauchon diagrams, but a direct inspection shows that there are exactly 14 such restricted permutations in the $2 \times 2$ case! This is not at all a coincidence as we will see in the following section. 

To finish, let us mention that the set of all minors that vanish 
on the closure of the $\hc$-orbit of symplectic leaves associated to 
$w \in \mathcal{S}$ has been described in \cite{GLL}. 
In order to describe this result, we need to introduce some notation. 

Set $N = m+p$, and let $\wom$, $\wop$ and $\woN$ denote the
longest elements in $S_m$, $S_p$ and $S_N$, respectively, so that 
$w_\circ^r(i) = r + 1 - i$ for $i = 1, . . . , r$.

\begin{definition}\label{def:mw}
For $w\in \mathcal{S}$, define $\mathcal{M}(w)$ to be the set of minors $[I|\Lambda]$, with
$I\subseteq \gc 1,  m\dc$ and $\Lambda\subseteq \gc 1, p\dc$, that satisfy at least one of the following conditions. 
\begin{enumerate}
\item $I\not\leq \wom w(L)$ for all $L\subseteq \onep\cap
w^{-1} \onem$ such that $|L|=|I|$ and $L\leq \Lambda$. 
\item $m+\Lambda\not\leq w\woN(L)$ for all $L\subseteq \onem\cap
\woN w^{-1} \gc m+1, N\dc$ such that $|L|=|\Lambda|$ and $L\leq I$. 
\item There exist $1\leq r \leq s\leq p$ and 
$\Lambda' \subseteq \Lambda\cap \gc r, s \dc$ such that\\ 
$|\Lambda'|> |\gc r, s \dc\setminus w^{-1} \gc m+r,\, m+s\dc |$.
\item There exist $1\leq r \leq s\leq m$ and $I'\subseteq I\cap \gc r, s \dc$
such that\\ 
$|I'|>|\woN \gc r, s \dc \setminus w^{-1} \wom \gc r, s \dc |$.
\end{enumerate}
\end{definition}

\begin{example}
For example, when $m=p=3$ and $w=(2~3~5~4)$, then $\mathcal{M}(w) =\{[1,2|2,3],[1,3|2,3],[2,3|2,3],[2,3|1,3],[2,3|1,2],[1,2,3|1,2,3]\}$. 
We observe that this family of minors defines a nonempty cell in $\mathcal{M}^{\mathrm{tnn}}_3(\bbR)$ by Example \ref{example:celld}. 
\fin\\
\end{example}

In \cite{GLL}, the following result was obtained thanks to previous results of \cite{BGY} and \cite{Ful}.
\begin{theorem}\label{theorem-4.2reformulation} 
Let $w\in \mathcal{S}$. The closure of the $\hc$-orbit $\mathcal{P}_{ w}$ is given by: 
\[
\overline{\mathcal{P}_{ w}}
=\{x\in \Mmpc\mid [I|J](x)=0~{\rm for~all}~[I|J]\in\mathcal{M}(w)\}.
\]
Moreover, the minor $[I|J]$ vanishes on $\overline{\mathcal{P}_{ w}}$ 
if and only if $[I|J]\in\mathcal{M}(w)$.
\end{theorem}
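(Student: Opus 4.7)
The plan is to deduce the theorem from two main ingredients: the Brown--Goodearl--Yakimov description of the closures $\overline{\mathcal{P}_w}$ as rank strata, and Fulton's combinatorial description of the ideals defining matrix Schubert varieties via essential sets. First I would recall from \cite{BGY} the explicit description of $\overline{\mathcal{P}_w}$ in terms of rank inequalities on certain submatrices of $x \in \Mmpc$. The key point is that there is a finite family of submatrices (indexed by certain corner positions), together with upper bounds on their ranks, such that $\overline{\mathcal{P}_w}$ is cut out of $\Mmpc$ by exactly these rank conditions. Passing through a suitable embedding, $\overline{\mathcal{P}_w}$ is identified with an affine slice of a matrix Schubert variety attached to $w \in \mathcal{S}$.

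Next I would translate rank conditions into vanishing of minors. A rank bound $\mathrm{rank}(x_{I'|J'}) \leq r$ is equivalent to the vanishing of every $(r+1)\times(r+1)$ minor of the submatrix indexed by $(I', J')$. The four conditions of Definition \ref{def:mw} should then be matched with BGY's rank bounds: condition (1) corresponds to Schubert-type bounds on upper-left submatrices expressed via $w$ and $\wom$; condition (2) encodes the symmetric bounds on lower-right submatrices, conjugated by $\woN$; and conditions (3) and (4) correspond to rank conditions on strips of consecutive columns, respectively rows. A case-by-case verification then shows that each of the four conditions forces $[I|\Lambda]$ to vanish on $\overline{\mathcal{P}_w}$, which yields the inclusion $\overline{\mathcal{P}_w} \subseteq \{x \mid [I|J](x)=0~\forall [I|J]\in \mathcal{M}(w)\}$ together with the ``if'' direction of the second statement.

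The reverse inclusion, and equivalently the ``only if'' direction, is what I expect to be the main obstacle. For this I would appeal to Fulton's theorem that the prime ideal of a matrix Schubert variety is generated precisely by the minors attached to the essential set of the associated permutation. The task then reduces to a purely combinatorial verification that the set of all minors forced to vanish on $\overline{\mathcal{P}_w}$ (as a consequence of Fulton's generators and the rank conditions they encode) coincides exactly with $\mathcal{M}(w)$. Equivalently, if $[I|\Lambda] \notin \mathcal{M}(w)$, one needs to produce witness subsets $L$ for the negations of all four conditions and, using these, to exhibit a point of $\mathcal{P}_w$ where $[I|\Lambda]$ does not vanish; irreducibility of $\overline{\mathcal{P}_w}$ together with the $\hc$-action means that a single explicit torus-translate suffices. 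The delicate part is the bookkeeping of the various index shifts (by $m$ and by $\woN$) and the interplay between the Bruhat-type order conditions (1)--(2) and the counting conditions (3)--(4) on strips; this combinatorial verification, matching the BGY/Fulton rank conditions with the four clauses of Definition \ref{def:mw}, is the technical heart of the argument and is the step where the substance of the proof of \cite{GLL} resides.
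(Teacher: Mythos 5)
The paper you are reading is a survey: it does not prove this theorem, but states that it ``was obtained thanks to previous results of \cite{BGY} and \cite{Ful}'' and defers the actual argument to \cite{GLL}. Your proposal rests on exactly those two ingredients — Brown--Goodearl--Yakimov's rank-condition description of $\overline{\mathcal{P}_w}$ and Fulton's essential-set description of the defining ideal of a matrix Schubert variety — so at the level of strategy it matches what the paper indicates.

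One point deserves a caveat: you assert that the reverse set-inclusion $\{x \mid [I|J](x)=0\ \forall [I|J]\in\mathcal{M}(w)\}\subseteq \overline{\mathcal{P}_w}$ is ``equivalently'' the ``only if'' direction of the second statement. These are not logically equivalent. The reverse inclusion says that $\mathcal{M}(w)$ contains \emph{enough} minors to cut out $\overline{\mathcal{P}_w}$ set-theoretically (this is where Fulton's generating set, after translation into BGY's rank bounds, enters). The ``only if'' direction says that $\mathcal{M}(w)$ contains \emph{every} minor vanishing on $\overline{\mathcal{P}_w}$, i.e.\ $\mathcal{M}(w)$ is saturated with respect to the ideal $I(\overline{\mathcal{P}_w})$. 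A priori, one could have a set of minors that cuts out the variety without being the full set of vanishing minors. Both facts are indeed established in \cite{GLL} and both come down to the same combinatorial bookkeeping you describe (matching the four clauses of Definition~\ref{def:mw} against the BGY/Fulton rank conditions), but you should separate the two claims: for the reverse inclusion, check that the rank bounds defining $\overline{\mathcal{P}_w}$ are consequences of the vanishing of minors in $\mathcal{M}(w)$; for the ``only if'', exhibit, for each $[I|\Lambda]\notin\mathcal{M}(w)$, a point of $\mathcal{P}_w$ on which it is nonzero (and here a single witness point does suffice — the torus action and irreducibility are not actually needed for that step).
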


\section{From Cauchon diagrams to restricted permutations and back, via pipe dreams} 

In the previous section, we have seen that the torus-orbits of symplectic
leaves in $\mathcal{M}_{m,p}(\mathbb{C})$ are parameterised by the 
restricted permutations in $S_{m+p}$ given by 
\[
\mathcal{S}= \{w \in
S_{m+p} \ | \ -p \leq w(i) -i \leq m \mbox{ for all }i=1,2, \dots, m+p\}.
\]

In the $2\times 2$ case, this subposet of the Bruhat poset of $S_4$ is 
\[
\mathcal{S}= \{w \in
S_{4} \ | \ -2 \leq w(i) -i \leq 2 \mbox{ for all }i=1,2,3,4\}.
\]
and is shown 
below. 
\[
\xymatrixrowsep{2.4pc}\xymatrixcolsep{4pc}
    \xymatrix{ &&(13)(24)\\
    (13) \edge[urr]
    &(1243) \edge[ur]
    &&(1342) \edge[ul]
    &(24) \edge[ull]\\
    (123) \edge[u] \edge[ur]
    &(132) \edge[ul] \edge[urr]
    &(12)(34) \edge[ul] \edge[ur]
    &(243) \edge[ull] \edge[ur]
    &(234) \edge[ul] \edge[u]\\
    &(12) \edge[ul] \edge[u] \edge[ur]
    &(23) \edge[ull] \edge[ul] \edge[ur] \edge[urr]
    &(34) \edge[ul] \edge[u] \edge[ur]\\
    &&(1)  \edge[ul]
    \edge[u] \edge[ur] }
  \]
Inspection of this poset reveals that it is isomorphic to the poset of the 
$\ch$-prime ideals of $\oqmtwo$ displayed in Section 2; and so to a similar 
poset of the Cauchon diagrams corresponding to the $\ch$-prime ideals. 

More generally, it is known that the numbers of $\hc$-primes in $\mmpc$ (and so the number of $m \times p$ Cauchon diagrams) is equal to $|\mathcal{S}|$ (see \cite{Lau4}). This is no coincidence, and the connection between the two posets can be illuminated by using {\em Pipe Dreams}. 

The procedure to produce a restricted permutation from a Cauchon diagram goes
as follows. Given a Cauchon diagram, replace each  
black box by a cross, and each white box by an elbow joint, that is:

\begin{center}
\begin{tabular}{ccccccc}
{\Huge \young(\bbox)}   & $\leftrightarrow$ & \begin{pgfpicture}{0.5cm}{0.5cm}{1.5cm}{1.5cm}%
\pgfsetroundjoin \pgfsetroundcap%
\pgfsetlinewidth{0.8pt} 
\pgfmoveto{\pgfxy(0.5,0.5)}\pgflineto{\pgfxy(1.5,0.5)}\pgflineto{\pgfxy(1.5,1.5)}\pgflineto{\pgfxy(0.5,1.5)}\pgflineto{\pgfxy(0.5,0.5)}\pgfstroke
\pgfsetlinewidth{0.8pt} 
\pgfxyline(1,1.5)(1,0.5)
\pgfsetfillcolor{black}
\pgfmoveto{\pgfxy(1.0667,1.3845)}\pgflineto{\pgfxy(1,1.5)}\pgflineto{\pgfxy(0.9333,1.3845)}\pgflineto{\pgfxy(1,1.4423)}\pgfclosepath\pgffillstroke
\pgfxyline(0.5,1)(1.5,1)
\pgfmoveto{\pgfxy(0.6155,1.0667)}\pgflineto{\pgfxy(0.5,1)}\pgflineto{\pgfxy(0.6155,0.9333)}\pgflineto{\pgfxy(0.5577,1)}\pgfclosepath\pgffillstroke
\end{pgfpicture} & \qquad &
 {\Huge \young(\wbox)}  & $\leftrightarrow$ &
\begin{pgfpicture}{0.5cm}{0.5cm}{1.5cm}{1.5cm}%
\pgfsetlinewidth{0.8pt} 
\pgfsetroundjoin \pgfsetroundcap%

\pgfmoveto{\pgfxy(0.5,1)}\pgfpatharc{90}{0}{0.5cm}
\pgfstroke
\pgfsetfillcolor{black}
\pgfmoveto{\pgfxy(1.0601,1.3884)}\pgflineto{\pgfxy(1,1.5)}\pgflineto{\pgfxy(0.927,1.3884)}\pgflineto{\pgfxy(0.9968,1.4424)}\pgfclosepath\pgffillstroke
\pgfmoveto{\pgfxy(0.5,0.5)}\pgflineto{\pgfxy(1.5,0.5)}\pgflineto{\pgfxy(1.5,1.5)}\pgflineto{\pgfxy(0.5,1.5)}\pgflineto{\pgfxy(0.5,0.5)}\pgfstroke
\pgfmoveto{\pgfxy(1,1.5)}\pgfpatharc{180}{270}{0.5cm}
\pgfstroke
\pgfmoveto{\pgfxy(0.6116,1.073)}\pgflineto{\pgfxy(0.5,1)}\pgflineto{\pgfxy(0.6116,0.9399)}\pgflineto{\pgfxy(0.5576,1.0032)}\pgfclosepath\pgffillstroke
\end{pgfpicture}
\end{tabular}
\end{center}

For example, the Cauchon diagram 
\begin{center}
{\Huge \young(\wbox\bbox\wbox,\bbox\bbox\wbox,\wbox\wbox\wbox)}
\end{center}
produces the pipe dream 

\begin{center}\begin{minipage}{.3\linewidth}
\scalebox{0.8}{\begin{pgfpicture}{-0.5cm}{-0.5cm}{4.5cm}{4.5cm}%
\pgfsetlinewidth{0.2pt} 
\pgfxyline(0.5,3.5)(3.5,3.5)
\pgfxyline(0.5,2.5)(3.5,2.5)
\pgfxyline(0.5,1.5)(3.5,1.5)
\pgfxyline(0.5,0.5)(3.5,0.5)
\pgfxyline(3.5,0.5)(3.5,3.5)
\pgfxyline(2.5,0.5)(2.5,3.5)
\pgfxyline(1.5,0.5)(1.5,3.5)
\pgfxyline(0.5,0.5)(0.5,3.5)
\pgfputat{\pgfxy(0.3,1)}{\pgfnode{rectangle}{center}{\color{black}\small 1}{}{\pgfusepath{}}}
\pgfputat{\pgfxy(0.3,2)}{\pgfnode{rectangle}{center}{\color{black}\small 2}{}{\pgfusepath{}}}
\pgfputat{\pgfxy(0.3,3)}{\pgfnode{rectangle}{center}{\color{black}\small 3}{}{\pgfusepath{}}}
\pgfputat{\pgfxy(3.8,1)}{\pgfnode{rectangle}{center}{\color{black}\small 4}{}{\pgfusepath{}}}
\pgfputat{\pgfxy(3.8,2)}{\pgfnode{rectangle}{center}{\color{black}\small 5}{}{\pgfusepath{}}}
\pgfputat{\pgfxy(3.8,3)}{\pgfnode{rectangle}{center}{\color{black}\small 6}{}{\pgfusepath{}}}
\pgfputat{\pgfxy(1,0.2)}{\pgfnode{rectangle}{center}{\color{black}\small 1}{}{\pgfusepath{}}}
\pgfputat{\pgfxy(2,0.2)}{\pgfnode{rectangle}{center}{\color{black}\small 2}{}{\pgfusepath{}}}
\pgfputat{\pgfxy(3,0.2)}{\pgfnode{rectangle}{center}{\color{black}\small 3}{}{\pgfusepath{}}}
\pgfputat{\pgfxy(1,3.8)}{\pgfnode{rectangle}{center}{\color{black}\small 4}{}{\pgfusepath{}}}
\pgfputat{\pgfxy(2,3.8)}{\pgfnode{rectangle}{center}{\color{black}\small 5}{}{\pgfusepath{}}}
\pgfputat{\pgfxy(3,3.8)}{\pgfnode{rectangle}{center}{\color{black}\small 6}{}{\pgfusepath{}}}

\pgfsetlinewidth{0.8pt} 
\pgfmoveto{\pgfxy(1,1.5)}\pgfpatharc{180}{270}{0.5cm}
\pgfstroke
\pgfsetfillcolor{black}
\pgfmoveto{\pgfxy(1.0601,1.3845)}\pgflineto{\pgfxy(1,1.5)}\pgflineto{\pgfxy(0.927,1.3845)}\pgflineto{\pgfxy(0.9968,1.4425)}\pgfclosepath\pgffillstroke
\pgfmoveto{\pgfxy(0.5,1)}\pgfpatharc{90}{0}{0.5cm}
\pgfstroke
\pgfstroke
\pgfmoveto{\pgfxy(0.6155,1.073)}\pgflineto{\pgfxy(0.5,1)}\pgflineto{\pgfxy(0.6155,0.9399)}\pgflineto{\pgfxy(0.5577,1.0032)}\pgfclosepath\pgffillstroke

\pgfsetlinewidth{0.8pt} 
\pgfmoveto{\pgfxy(2,1.5)}\pgfpatharc{180}{270}{0.5cm}
\pgfstroke
\pgfsetfillcolor{black}
\pgfmoveto{\pgfxy(2.0601,1.3845)}\pgflineto{\pgfxy(2,1.5)}\pgflineto{\pgfxy(1.927,1.3845)}\pgflineto{\pgfxy(1.9968,1.4425)}\pgfclosepath\pgffillstroke
\pgfmoveto{\pgfxy(1.5,1)}\pgfpatharc{90}{0}{0.5cm}
\pgfstroke
\pgfstroke
\pgfmoveto{\pgfxy(1.6155,1.073)}\pgflineto{\pgfxy(1.5,1)}\pgflineto{\pgfxy(1.6155,0.9399)}\pgflineto{\pgfxy(1.5577,1.0032)}\pgfclosepath\pgffillstroke

\pgfsetlinewidth{0.8pt} 
\pgfmoveto{\pgfxy(3,1.5)}\pgfpatharc{180}{270}{0.5cm}
\pgfstroke
\pgfsetfillcolor{black}
\pgfmoveto{\pgfxy(3.0601,1.3845)}\pgflineto{\pgfxy(3,1.5)}\pgflineto{\pgfxy(2.927,1.3845)}\pgflineto{\pgfxy(2.9968,1.4425)}\pgfclosepath\pgffillstroke
\pgfmoveto{\pgfxy(2.5,1)}\pgfpatharc{90}{0}{0.5cm}
\pgfstroke
\pgfstroke
\pgfmoveto{\pgfxy(2.6155,1.073)}\pgflineto{\pgfxy(2.5,1)}\pgflineto{\pgfxy(2.6155,0.9399)}\pgflineto{\pgfxy(2.5577,1.0032)}\pgfclosepath\pgffillstroke

\pgfsetlinewidth{0.8pt} 
\pgfmoveto{\pgfxy(3,2.5)}\pgfpatharc{180}{270}{0.5cm}
\pgfstroke
\pgfsetfillcolor{black}
\pgfmoveto{\pgfxy(3.0601,2.3845)}\pgflineto{\pgfxy(3,2.5)}\pgflineto{\pgfxy(2.927,2.3845)}\pgflineto{\pgfxy(2.9968,2.4425)}\pgfclosepath\pgffillstroke
\pgfmoveto{\pgfxy(2.5,2)}\pgfpatharc{90}{0}{0.5cm}
\pgfstroke
\pgfstroke
\pgfmoveto{\pgfxy(2.6155,2.073)}\pgflineto{\pgfxy(2.5,2)}\pgflineto{\pgfxy(2.6155,1.9399)}\pgflineto{\pgfxy(2.5577,2.0032)}\pgfclosepath\pgffillstroke

\pgfsetlinewidth{0.8pt} 
\pgfmoveto{\pgfxy(3,3.5)}\pgfpatharc{180}{270}{0.5cm}
\pgfstroke
\pgfsetfillcolor{black}
\pgfmoveto{\pgfxy(3.0601,3.3845)}\pgflineto{\pgfxy(3,3.5)}\pgflineto{\pgfxy(2.927,3.3845)}\pgflineto{\pgfxy(2.9968,3.4425)}\pgfclosepath\pgffillstroke
\pgfmoveto{\pgfxy(2.5,3)}\pgfpatharc{90}{0}{0.5cm}
\pgfstroke
\pgfstroke
\pgfmoveto{\pgfxy(2.6155,3.073)}\pgflineto{\pgfxy(2.5,3)}\pgflineto{\pgfxy(2.6155,2.9399)}\pgflineto{\pgfxy(2.5577,3.0032)}\pgfclosepath\pgffillstroke

\pgfsetlinewidth{0.8pt} 
\pgfmoveto{\pgfxy(1,3.5)}\pgfpatharc{180}{270}{0.5cm}
\pgfstroke
\pgfsetfillcolor{black}
\pgfmoveto{\pgfxy(1.0601,3.3845)}\pgflineto{\pgfxy(1,3.5)}\pgflineto{\pgfxy(0.927,3.3845)}\pgflineto{\pgfxy(0.9968,3.4425)}\pgfclosepath\pgffillstroke
\pgfmoveto{\pgfxy(0.5,3)}\pgfpatharc{90}{0}{0.5cm}
\pgfstroke
\pgfstroke
\pgfmoveto{\pgfxy(0.6155,3.073)}\pgflineto{\pgfxy(0.5,3)}\pgflineto{\pgfxy(0.6155,2.9399)}\pgflineto{\pgfxy(0.5577,3.0032)}\pgfclosepath\pgffillstroke

\pgfxyline(1.5,3)(2.5,3)
\pgfmoveto{\pgfxy(1.6155,3.0667)}\pgflineto{\pgfxy(1.5,3)}\pgflineto{\pgfxy(1.6155,2.9333)}\pgflineto{\pgfxy(1.5577,3)}\pgfclosepath\pgffillstroke
\pgfxyline(2,2.5)(2,3.5)
\pgfmoveto{\pgfxy(2.0667,3.3845)}\pgflineto{\pgfxy(2,3.5)}\pgflineto{\pgfxy(1.9333,3.3845)}\pgflineto{\pgfxy(2,3.4425)}\pgfclosepath\pgffillstroke

\pgfxyline(1.5,2)(2.5,2)
\pgfmoveto{\pgfxy(1.6155,2.0667)}\pgflineto{\pgfxy(1.5,2)}\pgflineto{\pgfxy(1.6155,1.9333)}\pgflineto{\pgfxy(1.5577,2)}\pgfclosepath\pgffillstroke
\pgfxyline(2,1.5)(2,2.5)
\pgfmoveto{\pgfxy(2.0667,2.3845)}\pgflineto{\pgfxy(2,2.5)}\pgflineto{\pgfxy(1.9333,2.3845)}\pgflineto{\pgfxy(2,2.4425)}\pgfclosepath\pgffillstroke

\pgfxyline(0.5,2)(1.5,2)
\pgfmoveto{\pgfxy(0.6155,2.0667)}\pgflineto{\pgfxy(0.5,2)}\pgflineto{\pgfxy(0.6155,1.9333)}\pgflineto{\pgfxy(0.5577,2)}\pgfclosepath\pgffillstroke
\pgfxyline(1,1.5)(1,2.5)
\pgfmoveto{\pgfxy(1.0667,2.3845)}\pgflineto{\pgfxy(1,2.5)}\pgflineto{\pgfxy(0.9333,2.3845)}\pgflineto{\pgfxy(1,2.4425)}\pgfclosepath\pgffillstroke

\end{pgfpicture}}
\end{minipage}
\end{center}

We obtain a permutation $\sigma$ 
from the pipe dream in the following way. To calculate $\sigma(i)$, locate 
the $i$ either on the right hand side or the bottom of the pipe dream and 
and trace through the pipe dream to find the number $\sigma(i)$ that is at the end of the pipe starting at $i$. In the example displayed, 
we find that $\sigma= 135246$ (in one-line notation). 

It is easy to check that this produces a restricted permutation of the
required type by using the observation that as you move along a pipe from
source to image, you can only move upwards and leftwards; so, for example, in
any $3\times 3$ example $\sigma(2)$ is at most $5$ (the number directly above $2$). 

This procedure provides an explicit bijection between the set of $m \times p$ Cauchon diagrams and the poset $\mathcal{S}$ (see \cite{Pos,CaMe}).

\section{The Unifying Theory} 

In the previous sections we have seen that the nonempty cells in $\mtnnmp$, the torus-invariant prime ideals in $\mmpc$ and the closure of the $\hc$-orbits of symplectic leaves are all parametrised by $m \times p$ Cauchon diagrams. This suggests that there might be a connection between these objects. Going a step further, all these objects are characterised by certain families of (quantum) minors. 

First, totally nonnegative cells are defined by the vanishing of families of
minors. Some of the TNN cells are empty. So it is natural to introduce the
following definition. A family of minors is {\it admissible} if the
corresponding TNN cell is nonempty. The obvious question to ask is:\\

\noindent 
{\bf Question: what are the admissible families of minors?}\\

Next, in the quantum case, $\hc$-primes of $\mmpc$ are generated by quantum
minors when we assume that $q$ is transcendental over $\mathbb{Q}$. The
obvious question in this setting is:\\

\noindent 
{\bf Question: which families of quantum minors generate 
$\hc$-invariant prime ideals?}\\

Finally, it follows from the work of Brown, Goodearl and Yakimov that the
closure of the $\hc$-orbits of symplectic leaves in $\Mmpc$ are algebraic, and
are defined by rank conditions. In other words, they are defined by the
vanishing of some families of minors. The obvious question in this context is:
\\ 

\noindent 
{\bf Question: which families of minors?}\\

At first, we may be tempted to propose the following conjecture. 
Let $Z_q$ be a family of quantum minors, and $Z$ be the corresponding family of minors. Then $\langle Z_q \rangle $ is a $\hc$-prime ideal if and only if the cell $S^0_{Z}$ is nonempty.

Stated like this, this conjecture is wrong. The problem here is that distinct families of minors may generate the same $\hc$-invariant prime ideal.
For instance, the ideal generated by $a$ and $b$ in $\oqmtwo$ coincides with
the ideal generated by $a$, $b$ and the quantum determinant $[1,2|1,2]_q$;
moreover this ideal is an $\hc$-invariant prime ideal. So we need to be a bit
more precise in order to get a correct statement. It turns out that the right
thing to do is to compare the admissible families of minors first with the set
of all minors that vanish on the closure of a torus-orbit of symplectic leaves in
$\Mmpc$, and second with the set of all quantum minors that belong to a
torus-invariant prime ideal in $\mmpc$.

\subsection{An algorithm to rule them all}

In \cite{Cau,Cau1,Cau2}, Cauchon developed and used an algorithm, called the
{\it deleting derivations algorithm} in order to study the $\hc$-invariant
prime ideals in $\mmpc$. Roughly speaking, in the $2 \times 2$ case, this
algorithm consists in the following change of variable: 

$$\left( \begin{array}{cc}
a & b \\ c & d 
\end{array}\right) \longrightarrow 
\left( \begin{array}{cc}
a-bd^{-1}c & b \\ c & d 
\end{array}\right) .$$
Let us now give a precise definition of the deleting derivations algorithm.  

If $M=(x_{i,\alpha})\in \mathcal{M}_{m,p}(K)$, then we set 
$$g_{j,\beta}(M)=(x'_{i,\alpha}) \in \mathcal{M}_{m,p}(K),$$
where
$$x'_{i,\alpha} := \left\{ \begin{array}{ll}
x_{i,\alpha} - x_{i,\beta}x_{j,\beta}^{-1} x_{j,\alpha} &  \mbox{if } x_{j,\beta} \neq 0, \: i < j \mbox{ and } \alpha < \beta \\ 
x_{i,\alpha}  &  \mbox{otherwise. }
\end{array} \right. $$

We set $M^{(j,\beta)}:= g_{j,\beta} \circ  \dots  \circ  g_{m,p-1}  \circ g_{m,p}(M)$ where the indices are taken in  lexicographic order.  

The matrix $M^{(1,1)}$ is called the matrix obtained from $M$ at the end of
the deleting derivations algorithm.

The deleting derivations algorithm has an inverse that is called the 
{\em restoration algorithm}. It was originally developed in \cite{Lau} 
to study
$\hc$-primes in quantum matrices. 
Roughly speaking, in the $2 \times 2$ case, 
the restoration algorithm consists of making the following change of variable:
$$\left( \begin{array}{cc}
a & b \\ c & d 
\end{array}\right) \longrightarrow 
\left( \begin{array}{cc}
a+bd^{-1}c & b \\ c & d 
\end{array}\right) .$$
Let us now give a precise definition of the restoration algorithm.  

If $M=(x_{i,\alpha})\in \mathcal{M}_{m,p}(K)$, then we set 
$$f_{j,\beta}(M)=(x'_{i,\alpha}) \in \mathcal{M}_{m,p}(K),$$
where
$$x'_{i,\alpha} := \left\{ \begin{array}{ll}
x_{i,\alpha} + x_{i,\beta}x_{j,\beta}^{-1} x_{j,\alpha} &  \mbox{if } x_{j,\beta} \neq 0, \: i < j \mbox{ and } \alpha < \beta \\ 
x_{i,\alpha}  &  \mbox{otherwise. }
\end{array} \right. $$

We set $M^{(j,\beta)}:=f_{j,\beta} \circ \dots \circ f_{1,2}  \circ f_{1,1}(M)$ where the indices are taken in the reverse of the lexicographic order.  

The matrix $M^{(m,p)}$ is called the matrix obtained from $M$ at the end of
the restoration algorithm.

\begin{example}
\label{exampleRestorationAlgo}
Set $M=\left( \begin{array}{rrr} 
 1 & -1 & 1\\
 0 & 2 & 1\\
 1 & 1 & 1 
 \end{array}\right)$. Then, applying the restoration algorithm to $M$, we get successively: 
 $$M^{(2,2)}=M^{(2,1)}=M^{(1,3)}=M^{(1,2)}=M^{(1,1)}=M,$$
 \ 
$$M^{(3,1)}=M^{(2,3)}= \left( \begin{array}{rrr} 
 1 & 1 & 1\\
 0 & 2 & 1\\
 1 & 1 & 1 
 \end{array}\right), \ \ \ M^{(3,2)}= \left( \begin{array}{rrr} 
 2 & 1 & 1\\
 2 & 2 & 1\\
 1 & 1 & 1 
 \end{array}\right)$$
 and 
 $$M^{(3,3)}= \left( \begin{array}{rrr} 
 3 & 2 & 1\\
 3 & 3 & 1\\
 1 & 1 & 1 
 \end{array}\right).$$ 
which is the matrix obtained from $M$ at the end of the restoration 
algorithm. \fin\\ 
\end{example}

\subsection{The restoration algorithm and TNN matrices}

It is easy to see that the matrix $M^{(3,3)}$ obtained from $M$ by the restoration
algorithm in Example \ref{exampleRestorationAlgo} is not TNN. In fact, the
only minor that is negative is $[1,2|2,3](M^{(3,3)})$. The reason for this failure to be
TNN is that the starting matrix $M$ has a negative entry. Moreover one can
check by following the steps of the restoration algorithm that
$[1,2|2,3](M^{(3,3)})=m_{1,2}m_{2,3}$. In general, one can express the
(quantum) minors of $M^{(j,\beta)^+}$ in terms of the (quantum) minors of
$M^{(j,\beta)}$ (see \cite{GLL,GLL2}). As a consequence, one is able to prove the following
result that gives a necessary and sufficient condition for a real matrix to be
TNN.

\begin{theorem} \cite{GLL}
\begin{enumerate}
\item If the entries of $M$ are nonnegative and its zeros form a Cauchon
diagram, then the matrix $M^{(m,p)}$ obtained from $M$ at the end of
the restoration algorithm is TNN. 
\item Let $M$ be a matrix with real entries.
We can apply the deleting derivations algorithm to $M$. Let $N$ denote the
matrix obtained at the end of the deleting derivations algorithm.\\ 
Then $M$ is TNN if and only if the matrix $N$ is nonnegative and
its zeros form a Cauchon diagram. (That is, the zeros of $N$ correspond to the black boxes of a Cauchon diagram.)
\end{enumerate}
\end{theorem}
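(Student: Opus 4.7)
The plan is to analyze the effect of one step of the restoration (or deleting derivations) algorithm on the minors of the matrix, and then iterate. The central technical ingredient would be the following transformation identity: for an intermediate matrix $M$ with $x_{j,\beta}\neq 0$ and $M'=f_{j,\beta}(M)$, each minor $[I|\Lambda](M')$ can be written as a sum, with manifestly nonnegative rational coefficients, of products of minors of $M$, with denominators being powers of $x_{j,\beta}$; an analogous identity holds for $g_{j,\beta}$. The cleanest derivation would go via Cauchon's quantum analogue: the transformation law for quantum minors under a single deleting-derivations step is a sum of monomials whose coefficients are polynomials in $q,q^{-1}$ with nonnegative integer coefficients, and the classical identity is obtained by specialising at $q=1$.

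Granting this lemma, I would prove part (1) by induction along the restoration steps. The inductive invariant to carry forward is that the current matrix is nonnegative, has Cauchon zero pattern, and has all minors nonnegative. The base case is the hypothesis on $M$, together with the observation that the first few $f_{j,\beta}$ act trivially. In the inductive step, if $x_{j,\beta}=0$ then $f_{j,\beta}$ is the identity; otherwise, the entries of the new matrix are nonnegative by direct inspection of the update $x'_{i,\alpha}=x_{i,\alpha}+x_{i,\beta}x_{j,\beta}^{-1}x_{j,\alpha}$, the minors are nonnegative by the transformation lemma applied to the nonnegative minors of the old matrix, and the updated zero pattern remains Cauchon by tracking which new entries vanish.

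For part (2), the \emph{if} direction is immediate from part (1): the restoration algorithm is the inverse of the deleting derivations algorithm, so $M=N^{(m,p)}$, and part (1) applied to $N$ gives that $M$ is TNN. For the \emph{only if} direction, I would run the deleting derivations algorithm on a TNN matrix $M$ and show by induction that each intermediate matrix remains TNN. The key observation is that the $g_{j,\beta}$ update satisfies $x'_{i,\alpha}x_{j,\beta}=[i,j|\alpha,\beta](\text{current matrix})$, so when the pivot is positive the updated entries are nonnegative ratios of minors; when the pivot vanishes, TNN forces $x_{i,\beta}x_{j,\alpha}=0$ for each relevant $(i,\alpha)$, which plants exactly the Cauchon-pattern zeros around the $(j,\beta)$-entry. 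Preservation of TNN through the induction step follows from the transformation lemma in the $g_{j,\beta}$ direction. At the end of the algorithm one obtains a matrix that is nonnegative and whose zero set is, by construction, a Cauchon diagram.

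The main obstacle I anticipate is establishing the transformation lemma in a form that makes nonnegativity of the coefficients \emph{manifest}. A bare rational identity is not enough — one needs to avoid any hidden cancellations, which is precisely why the quantum route is natural: quantum minor identities in $\oqmmpc$ have expansions whose $q$-coefficients are nonnegative integers, and specialising at $q=1$ preserves nonnegativity. A secondary subtlety is keeping track of the Cauchon property of the zero set throughout the induction, which requires a careful case analysis at each pivot — especially verifying that new zeros created in the interior do not violate the left-or-above vanishing rule.
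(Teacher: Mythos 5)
Your overall strategy --- a minor transformation lemma together with induction along the restoration/deletion steps, with the classical identities obtained from quantum ones at $q=1$ --- is indeed the route taken in \cite{GLL,GLL2}. But your inductive invariant is wrong, and this breaks both parts of the argument. For part (1) you propose to carry ``all minors nonnegative'' along the restoration, with base case ``the hypothesis on $M$.'' The hypothesis, however, is only that the \emph{entries} of $M$ are nonnegative and that its zeros form a Cauchon diagram; nothing is assumed about higher minors. For instance $M=\left(\begin{smallmatrix}1&2\\2&1\end{smallmatrix}\right)$ has nonnegative entries and a (vacuously) Cauchon zero set, yet $\det M=-3<0$. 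The first nontrivial step $f_{2,2}$ therefore acts on a matrix with a negative minor, and your transformation lemma --- which is supposed to write minors of $f_{j,\beta}(M)$ as nonnegative combinations of products of minors of $M$ --- has nothing to feed on. The theorem still holds here, since $M^{(2,2)}=\left(\begin{smallmatrix}5&2\\2&1\end{smallmatrix}\right)$ is TNN, but not via your invariant. Read backwards, the same example kills the ``only if'' direction of part (2): you claim the deleting derivations algorithm preserves total nonnegativity, but $g_{2,2}$ applied to the TNN matrix $\left(\begin{smallmatrix}5&2\\2&1\end{smallmatrix}\right)$ returns $\left(\begin{smallmatrix}1&2\\2&1\end{smallmatrix}\right)$, which is not TNN. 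The identity $x'_{i,\alpha}x_{j,\beta}=[i,j\,|\,\alpha,\beta]$ you invoke controls only the $1\times1$ minors (the entries) of the intermediate matrices; it says nothing about larger ones, so ``preservation of TNN through the induction step'' is simply false.

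What actually works is a step-dependent bookkeeping. One does not assert that \emph{all} minors of the intermediate matrix $M^{(j,\beta)}$ are nonnegative; one controls only a restricted family of minors determined by the current pivot position $(j,\beta)$ (together with a partial ``Cauchon condition relative to $(j,\beta)$'' on the zero set, rather than the full Cauchon-diagram condition), and one shows this controlled family grows along the algorithm until, at $(m,p)$, it captures every minor. Equivalently, the transformation lemma must be iterated all the way back so that each minor of $M^{(m,p)}$ is expressed as a subtraction-free Laurent expression in the \emph{entries} of the original $M$ --- not in its higher minors --- after which nonnegativity of the entries alone suffices. Your proposal correctly identifies the key lemma and the quantum route to establishing it, and the ``if'' direction of part (2) via part (1) is fine, but without replacing the blanket invariant by the step-dependent one the inductions cannot close.
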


\begin{exercise}\rm  
Use the deleting derivations algorithm to test whether the following matrices are TNN:
$$ M_1=\left( \begin{array}{rrrr} 
11 & 7 & 4 & 1 \\
7  & 5 & 3 & 1 \\
4  & 3 & 2 & 1 \\ 
1  & 1 & 1 & 1
 \end{array}\right) \mbox{ and } M_2=\left( \begin{array}{rrrr} 
7 & 5 & 4 & 1 \\
6 & 5 & 3 & 1 \\
4 & 3 & 2 & 1 \\ 
1 & 1 & 1 & 1
 \end{array}\right).$$
\end{exercise}

 \subsection{Main result}

Let $C$ be an $m \times p$ Cauchon diagram and $T=(t_{i,\alpha})$ be a matrix with entries in a skew-field $K$. Assume that $t_{i,\alpha}=0$ if and only if $(i,\alpha)$ is a black box of $C$. Set
$$T_C := f_{m,p} \circ \dots \circ f_{1,2}  \circ f_{1,1}
(T),$$ 
so that $T_C$ is the matrix obtained from $T$ by the restoration algorithm. 

\begin{example} Let $m=p=3$ and  
consider the Cauchon diagram 
\[
\young(\wbox\bbox\wbox,\bbox\bbox\wbox,\wbox\wbox\wbox)
\]

Then $$T=\left( \begin{array}{ccc}
t_{1,1} & 0 & t_{1,3}\\
0 & 0 & t_{2,3} \\
t_{3,1} & t_{3,2} & t_{3,3} 
\end{array} \right)$$ 
and $T^{(j,\beta)}:= f_{j,\beta} \circ \dots \circ f_{1,1} (T)$.

Then we have   $$T^{(3,2)}=T^{(3,1)}=T^{(2,3)}=T^{(2,2)}=T^{(2,1)}=T^{(1,3)}=T^{(1,2)}=T$$ and 
$$T_C=T^{(3,3)}=\left( \begin{array}{ccc}
t_{1,1} +t_{1,3} t_{3,3}^{-1} t_{3,1}  & t_{1,3}t_{3,3}^{-1} t_{3,2}  & t_{1,3} \\
t_{2,3} t_{3,3}^{-1} t_{3,1} & t_{2,3} t_{3,3}^{-1} t_{3,2} & t_{2,3} \\
t_{3,1} & t_{3,2} & t_{3,3} 
\end{array} \right).$$ \fin\\
\end{example}

The above construction can be applied in a variety of situations. 
In particular, we have the following. \\

$\bullet$ If $K = \mathbb{R}$ and $T$ is nonnegative, then 
$T_C$ is TNN.   

$\bullet$ If the nonzero entries of $T$ commute and are algebraically independent, and if $K=\mathbb{C}(t_{ij})$, then the minors of $T_C$ that are equal to zero are exactly those that vanish on the closure of a given $\hc$-orbit of symplectic leaves. (See \cite{GLL}.)

$\bullet$ If the nonzero entries of $T$ are the generators of a certain quantum affine space over $\bbC$ and $K$ is the skew-field of fractions of this quantum affine space, then the quantum minors of $T_C$ that are equal to zero are exactly those belonging to the unique $\hc$-prime in $\mmpc$ associated to the Cauchon diagram $C$. (See \cite{Lau2} for more details.)

$\bullet$ The families of (quantum) minors we get depend only on $C$ in these
three cases. And if we start from the same Cauchon diagram in these three
cases, then we get exactly the same families. \\

As a consequence, we get the following comparison result (see \cite{GLL,GLL2}).

\begin{theorem}
\label{thm:main}
 Let $\mathcal{F}$ be a family of minors in the coordinate ring of $\mathcal{M}_{m,p}(\mathbb{C})$, and let $\mathcal{F}_q$ be the corresponding family of quantum minors in $\mathcal{O}_q(\mathcal{M}_{m,p}(\mathbb{C}))$. Then the following are equivalent:
\begin{enumerate}
\item The totally nonnegative cell associated to $\mathcal{F}$ is nonempty. 
\item $\mathcal{F}$ is the set of minors that vanish on the closure of a torus-orbit of symplectic leaves in $\mathcal{M}_{m,p}(\mathbb{C})$.
\item $\mathcal{F}_q$ is the set of quantum minors that belong to 
an $\ch$-prime in $\oqmmpc$.
\end{enumerate}
\end{theorem}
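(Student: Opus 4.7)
\spreuve The plan is to use the restoration algorithm as a unifying construction. Starting from an $m\times p$ Cauchon diagram $C$, I will run restoration in each of the three settings (TNN, Poisson, quantum) on a matrix whose zero pattern is exactly $C$, and show that the family of (quantum) minors which vanish on the output depends only on $C$ and is literally the same in all three contexts. Combined with the known bijections between Cauchon diagrams and each of the three classification posets, this forces conditions (1), (2), (3) to be equivalent.

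Concretely, for each Cauchon diagram $C$ form the matrix $T=(t_{i,\alpha})$ with $t_{i,\alpha}=0$ exactly when $(i,\alpha)\in C$ and $t_{i,\alpha}$ a free parameter otherwise. Specialise the nonzero parameters in three ways: (a) as strictly positive real numbers, (b) as commuting algebraically independent complex indeterminates over $\mathbb{C}(t_{i,\alpha})$, (c) as the canonical generators of the appropriate quantum affine space over $\mathbb{C}$. Run restoration in each case to get $T_C^{(\tnn)}$, $T_C^{(\mathrm{Poi})}$, $T_C^{(q)}$. The three bullets preceding Theorem~\ref{thm:main} give the necessary outputs: $T_C^{(\tnn)}$ is TNN (so its zero minors cut out a nonempty cell containing $T_C^{(\tnn)}$); the zero minors of $T_C^{(\mathrm{Poi})}$ are exactly those of $\mathcal{M}(w)$ vanishing on $\overline{\mathcal{P}_w}$, where $w$ is the restricted permutation attached to $C$ by the pipe-dream bijection; and the zero quantum minors of $T_C^{(q)}$ are exactly those lying in the $\hc$-prime associated to $C$ by Cauchon's bijection.

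For the converse directions I would argue as follows. Assuming (1), pick $M$ in the nonempty cell $S_{\mathcal{F}}^\circ$; the TNN characterisation theorem of the previous subsection says the deleting-derivations image of $M$ is nonnegative with zero pattern a Cauchon diagram $C$, and running restoration backwards from this image recovers $M$, forcing $\mathcal{F}$ to coincide with the vanishing family at $T_C^{(\tnn)}$. Assuming (2), apply the Brown--Goodearl--Yakimov classification and the pipe-dream bijection to obtain the Cauchon diagram $C$ attached to the orbit; Theorem~\ref{theorem-4.2reformulation} then identifies $\mathcal{F}$ with the vanishing family at $T_C^{(\mathrm{Poi})}$. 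Assuming (3), use Cauchon's bijection analogously, invoking \cite{Lau2, GLL2} to see that the $\hc$-prime is generated by (and equals the ideal of) the quantum minors it contains and that these are exactly the zero quantum minors of $T_C^{(q)}$. The pivotal fourth bullet then collapses all three characterisations into the single statement ``$\mathcal{F}$ arises as the vanishing family at $T_C$ for some Cauchon diagram $C$,'' which is self-consistent across the three settings.

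The main obstacle is precisely this fourth bullet: showing that, step by step, the restoration algorithm transforms (quantum) minors in parallel ways in all three contexts, so that which minors become zero depends only on $C$. This requires transformation formulas for $f_{j,\beta}$ acting on arbitrary (quantum) minors, both commutatively (for TNN and Poisson) and noncommutatively (for quantum matrices), and the verification that the commutative specialisation of the quantum formulas at $q=1$ matches the Poisson case. The quantum-side comparison relies crucially on $q$ being transcendental over $\mathbb{Q}$, used in \cite{Lau2, GLL2} to guarantee that the relevant ideals remain prime; lifting the restriction to arbitrary non-root-of-unity $q$ is the outstanding technical point highlighted in Section~2.
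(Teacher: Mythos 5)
Your proposal follows essentially the same route as the paper: it uses the restoration algorithm applied to a Cauchon-diagram-patterned matrix $T$ in the three settings, invokes the three bullets describing the vanishing families at $T_C$ together with the pivotal fourth bullet that these families depend only on $C$ and coincide, and closes the loop via the known Cauchon-diagram parametrisations (Postnikov, Brown--Goodearl--Yakimov with pipe dreams, and Cauchon). You also correctly identify the main technical burden---the step-by-step transformation formulas for $(q)$-minors under $f_{j,\beta}$ in all three contexts, with the transcendence hypothesis on $q$ for the quantum side---which is exactly what the paper defers to \cite{GLL,GLL2}.
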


This result has several interesting consequences.

First, it easily follows from Theorem \ref{thm:main} that the TNN cells in
$\mathcal{M}_{m,p}^{\mathrm{tnn}}$ are the traces of the closure of
$\hc$-orbits of symplectic leaves on $\mathcal{M}_{m,p}^{\mathrm{tnn}}$.

Next, the sets of all minors that vanish on the closure of a torus-orbit of
symplectic leaves in $\mathcal{M}_{m,p}(\mathbb{C})$ have been explicitly described in \cite{GLL} (see also Theorem \ref{theorem-4.2reformulation}). So, as a consequence of the previous theorem, {\em the sets of
minors that define nonempty totally nonnegative cells are explicitly
described:} these are the families $\mathcal{M}(w)$ of Definition \ref{def:mw} for $w \in \mathcal{S}$. On the other hand, when the deformation parameter $q$ is transcendental over the rationals, then the torus-invariant primes in $\OMmpc$ are generated by quantum minors, and so we deduce from the above theorem {\em explicit generating sets of quantum minors for the torus-invariant prime ideals of $\mathcal{O}_q(\mathcal{M}_{m,p}(\mathbb{C}))$}. 
Recently and independently, Yakimov \cite{Yak} also described explicit families of quantum minors that generate $\hc$-primes. However his families are smaller than ours and so are not adapted to the TNN world. The problem of deciding whether a given quantum minor belongs to the $\hc$-prime associated to a Cauchon diagram $C$ has been studied recently by Casteels \cite{Ca} who gave a combinatorial criterion inspired by Lindstr\"om's Lemma.

\section*{Acknowledgements}

The results in this paper were announced during the mini-workshop
``Nonneg\-a\-tiv\-i\-ty is a quantum phenomenon'' that took place at the
Mathematisches For\-schungs\-in\-sti\-tut Oberwolfach, 1--7 March 2009,
\cite{MFO}; we thank the director and staff of the MFO for providing the ideal
environment for this stimulating meeting. We also thank Konni Rietsch, Laurent
Rigal, Lauren Williams, Milen Yakimov and, especially, our co-author Ken
Goodearl for discussions and comments concerning the results presented in this
survey paper both at the workshop and at other times. We also thank Natalia
Iyudu for the organisation of the Belfast conference in August 2009 
at which we presented many of these results.

\bibliographystyle{amsplain}
\bibliography{biblio}

\begin{thebibliography}{10}

\bibitem{AK} T. Arakawa and M. Kaneko, {\em On poly-Bernoulli numbers}, Comment Math. Univ. St. Paul 48 (1999), 159-167.

\bibitem{Bre} 
F. Brenti, {\em Combinatorics and total positivity}, J. Combin. Theory
Ser. A 71 (1995), 175--218.

\bibitem{BrGo}
K.A. Brown and K.R. Goodearl, \emph{Lectures on algebraic quantum groups},
  Advanced Courses in Mathematics. CRM Barcelona, Birkh\"auser Verlag, Basel,
  2002. 

\bibitem{BGY}
K.A. Brown, K.R. Goodearl, and M.~Yakimov, \emph{Poisson structures on affine
  spaces and flag varieties. {I}. {M}atrix affine {P}oisson space}, Adv. Math.
  \textbf{206} (2006), no.~2, 567--629. 
  

\bibitem{Ca} K. Casteels, \emph{A graph theoretic method for determining
generating sets of prime ideals in $\mathcal{O}_q({\mathcal
M}_{m,n}(\mathbb{C}))$}, arxiv:0907.1617

\bibitem{Cau}
G.~Cauchon, \emph{Quotients premiers de {$O_q({\mathcal M}_n(k))$}}, J. Algebra
  \textbf{180} (1996), no.~2, 530--545. 

\bibitem{Cau1}
\bysame, \emph{Effacement des d\'erivations et spectres premiers des alg\`ebres quantiques}, J. Algebra \textbf{260} (2003), no.~2, 476--518.

\bibitem{Cau2}
\bysame, \emph{Spectre premier de {$O\sb q(M\sb n(k))$}: image canonique et
  s\'eparation normale}, J. Algebra \textbf{260} (2003), no.~2, 519--569.
  
\bibitem{CaMe} G. Cauchon and A. M\'eriaux, {\it Admissible diagrams in
$U_q^w(\mathfrak{g})$ and combinatoric properties of Weyl groups}, 
arxiv:0902.0754.

\bibitem{FZ}  S. Fomin and A. Zelevinsky, Math Intelligencer
{\em Total positivity: tests and parametrizations},  
Math. Intelligencer  22  (2000),  no. 1, 23--33.

\bibitem{Ful} W. Fulton, {\em Flags, Schubert polynomials, degeneracy
loci, and determinantal formulas}, Duke Math. J. {\bf 65} (1992), no. 3,
381--420.

\bibitem{GaPe} M. Gasca and J. M. Pe\~na, 
{\em Total positivity and Neville elimination}, Linear Algebra Appl. 165 
(1992), 25-44 


\bibitem{Goo}
K.R. Goodearl, \emph{A {D}ixmier-{M}oeglin equivalence for {P}oisson algebras
  with torus actions}, Algebra and its applications, Contemp. Math., vol. 419,
  Amer. Math. Soc., Providence, RI, 2006, pp.~131--154. 

\bibitem{GLL}
K.R. Goodearl, S. Launois, and T.H. Lenagan, 
\emph{Totally nonnegative cells and matrix Poisson varieties}, 
arxiv:0905.3631. 

\bibitem{GLL2} \bysame, 
\emph{Torus-invariant prime ideals in quantum matrices, totally 
nonnegative cells and symplectic leaves}, arxiv:0909.3935 

\bibitem{GoLen} K.R. Goodearl and T.H. Lenagan, {\em Quantum
  determinantal ideals}, Duke Math. J. 103 (2000) 165-190.

\bibitem{GoLen2}
\bysame, \emph{Prime ideals invariant under winding
  automorphisms in quantum matrices}, Internat. J. Math. \textbf{13} (2002),
  no.~5, 497--532. 

\bibitem{GoLen3}
\bysame, \emph{Winding-invariant prime ideals in quantum {$3\times 3$}
  matrices}, J. Algebra \textbf{260} (2003), no.~2, 657--687. 

\bibitem{GoLet1}
K.R. Goodearl and E.S. Letzter, \emph{Prime factor algebras of the coordinate
  ring of quantum matrices}, Proc. Amer. Math. Soc. \textbf{121} (1994), no.~4,
  1017--1025. 

\bibitem{GoLet2}
\bysame, \emph{The {D}ixmier-{M}oeglin equivalence in quantum coordinate rings
  and quantized {W}eyl algebras}, Trans. Amer. Math. Soc. \textbf{352} (2000),
  no.~3, 1381--1403. 
  
  \bibitem{GoYa} K.R.~Goodearl and M.~Yakimov, \emph{Poisson structures
  on affine spaces and flag varieties. II}, 
  Trans. Amer. Math. Soc. 361 (2009), 5753-5780. 
  

\bibitem{Kan} M. Kaneko, {\em Poly-{B}ernoulli numbers}, J. de Th\'eorie des Nombres de Bordeaux 9 (1997), 221-228. 

\bibitem{KLS} A. Knutson, T. Lam and D. Speyer, {\em Positroid varieties I: juggling and geometry}, arxiv: 0903.3694.

\bibitem{Lau}
S.~Launois, \emph{Id\'eaux premiers $\hc$-invariants de l'alg\`ebre des
  matrices quantiques}, Th\`ese de Doctorat, Universit\'e de Reims 2003.

\bibitem{Lau1}
\bysame, \emph{Les id\'eaux premiers invariants de {$O_q({\mathcal
  M}_{m,p}({\mathbb C}))$}}, J. Algebra \textbf{272} (2004), no.~1, 191--246.

  
\bibitem{Lau2}
\bysame, \emph{Generators for {${\mathcal H}$}-invariant prime ideals in
  {$O_q({\mathcal M}_{m,p}(\mathbb C))$}}, Proc. Edinb. Math. Soc. (2)
  \textbf{47} (2004), no.~1, 163--190. 


\bibitem{Lau3}
\bysame, \emph{Rank $t$ $\hc$-primes in quantum matrices}, Comm. Algebra \textbf{33} (2005), 837--854. 
  

\bibitem{Lau4}
\bysame, \emph{Combinatorics of {${\mathcal H}$}-primes in quantum
  matrices}, J. Algebra \textbf{309} (2007), no.~1, 139--167. 
  
  \bibitem{LLR} 
  S.~Launois, T.H.~Lenagan and L.~Rigal, 
  \emph{Prime ideals in the quantum Grassmannian},  
  Selecta Math. (N.S.) {\bf 13} (2008), no.~4, 697--725. 
  
\bibitem{Lind} B. Lindstr\"om, {\em On the vector representations of 
induced matroids}, Bull. London Math. Soc. 5 (1973), 85-90.


\bibitem{MFO} Mini-Workshop: Nonnegativity is a
quantum phenomenon. Abstracts from the mini-workshop held 1--7 March
2009. Organised by S. Launois and T.H. Lenagan. Oberwolfach Reports {\bf 6} (2009), no. 14.

\bibitem{Oh} S. Oh,  {\em Positroids and Schubert matroids}, arxiv:0803.1018.

\bibitem{PaWa}
B.~Parshall and J.P. Wang, \emph{Quantum linear groups}, Mem. Amer. Math. Soc.
  \textbf{89} (1991), no.~439, vi+157.

\bibitem{Pos}
A.~Postnikov, \emph{Total positivity, grassmannians, and networks}, 
\hfill\\math.CO/0609764.
 
\bibitem{Sk} M. A. Skandera, {\em Introductory notes on total positivity},
\hfill \\ http://www.math.lsa.umich.edu/~fomin/eidma.html

\bibitem{Tal} K. Talaska,  {\em A formula for Pl\"ucker coordinates associated with a planar network}, Int. Math. Res. Not. IMRN (2008), Art. ID rnn 081, 19pp. 
              
\bibitem{Wil} L.K. Williams, {\em Enumeration of totally positive Grassmann cells}, Adv. Math. 190 (2005), no. 2, 319--342.
 
\bibitem{Yak} M. Yakimov, {\em Invariant prime ideals in quantizations of nilpotent Lie algebras}, arxiv:0905.0852.


\end{thebibliography}

\vskip 1cm

\providecommand{\bysame}{\leavevmode\hbox to3em{\hrulefill}\thinspace}
\providecommand{\href}[2]{#2}

\vskip 1cm
\begin{minipage}{1.00\linewidth}
\noindent 
S Launois: \\
School of Mathematics, Statistics and Actuarial Science,\\
University of Kent\\
Canterbury, Kent CT2 7NF, UK\\
Email: {\tt S.Launois@kent.ac.uk} \\

\noindent 
T H Lenagan: \\
Maxwell Institute for Mathematical Sciences\\
School of Mathematics, University of Edinburgh,\\
James Clerk Maxwell Building, King's Buildings, Mayfield Road,\\
Edinburgh EH9 3JZ, Scotland, UK\\
E-mail: {\tt tom@maths.ed.ac.uk} 
\end{minipage}

 \end{document}